\newtheorem{theorem}{Theorem}[section]
\newtheorem{lemma}[theorem]{Lemma}
\newtheorem{proposition}[theorem]{Proposition}
\newcommand{\pp}{\mathbb P}
\newcommand{\calO}{\mathcal{O}}
\newcommand{\calE}{\mathcal{E}}
\newcommand{\st}{\overline}
\begin{document}

\title[bicanonical map]{Surfaces with 
 $p_g=q=1$, $K^2=8$ and nonbirtional
 bicanonical map}
\author{
Giuseppe Borrelli
}
\address{
\
\newline 
Universidade federal de Pernambuco,\newline
Departamento de Matem\' atica, \newline
Cidade Universitaria, 
50670-901
Recife (PE), Brasil
\
\newline
}
\curraddr{ } 
\email{borrelli@dmat.ufpe.br}

\begin{abstract}
We prove that if the bicanonical map of 
a minimal surface
 of general type $S$ with $p_{g}=q=1$
and $K_{S}^2=8$ is non birational,
then
it is a double cover onto a rational surface.

An application of this theorem is the 
 complete classification of minimal
surfaces of general type 
with $p_{g}=q=1$,  $K_{S}^2=8$ 
and nonbirational bicanonical map.
\end{abstract}

\maketitle


Let $S$ be a smooth surface 
and $K_{S}$ a canonical divisor of $S$.
The $m-th$ canonical map is the rational
 map $\varphi_{mK_{S}}$ associated to
the complete linear system $\vert mK_{S}\vert$.
Investigating the behavior of $\varphi_{mK_{S}}$ ($m>0$) 
is a useful tool when 
studying  surfaces of general type, 
i.e. surfaces for which   $\varphi_{mK_{S}}$ is generically 
finite for some $m\geq 1$.

The behavior of the maps $\varphi_{mK_{S}}$
is well understood  when $m\geq 3$
(cf. \cite{C2} for a survey). 
For instance, we know that
 if $m\geq5$, then $\varphi_{2K_{S}}$ maps $S$ birationally
 onto a normal surface (cf. \cite{bom}).

Let $S$ be a (smooth) minimal surface of general type. 
Assume that the bicanonical (i.e. $m=2$) map is non birational.
Then $S$ may or may not admit a fibration whose general 
fiber is a smooth curve of genus $2$.
 If it does, we say that $S$ 
 {\em presents the  standard case}.
By \cite{R} we know that if $K_{S}^2>9$, 
then $S$ presents the standard case. 

Surfaces with non birational bicanonical map 
not presenting the standard case
 have been intensively investigated
in the last decade by several authors,
and their classification 
is rather understood with
 the exception of the cases when  
 $p_{g}=q\leq1$ (cf. \cite{BCP} for a survey on
 this subject).

The cases when $p_g=q=0$ have been studied
by Mendes Lopes and Pardini in the last years. 
We refer to \cite{MP8} and \cite{MP9} for a survey, here
we just mention that they obtained a complete 
classification of surfaces with $K_{S}^2\geq 6$
and nonbirational bicanonical map.
It follows from their analysis that if 
$K_{S}^2\geq 6$ and $\varphi_{2K}$
is nonbirational, then $K_{S}^2\leq 8$ and
$\varphi_{2K}$ factors through a 
double cover onto a rational surface.
Furthermore,
  $\deg \varphi_{2K}=2$ if $K_{S}^2=7,8$ and 
$\deg \varphi_{2K}=2$ or $4$ if $K_S^2=6$.

Let $S$ be a minimal surface of general type with $p_{g}=q=1$.
Then $K_{S}^2\geq2$
by \cite[Lemma 14]{bom} and $K_{S}^2\leq 9$ by \cite{Mi1} and \cite{Ya}.
The only cases for which 
there is a complete classification from the point
of view of the non birationality of the 
bicanonical map are those when $K_{S}^2=2$ 
and $K_S^2=9$.
In the first case $S$ presents the standard
case, in fact the Albanese map is a fibration
with general fiber a curve of genus $2$ (cf. \cite{C1}).
If the second case occurs, then  the bicanonical map is 
birational 
(cf. \cite[Proposition 3.1]{CM2} and  \cite[Th\'eor\`eme 2.2 and Corollaire 3]{X3}).
 
Surfaces with  $K_S^{2}=8$ have been studied by Polizzi in \cite{Po},
who considered those whose bicanonical map factors
through a double cover onto rational surface.
 He proved that in this case $\varphi_{2K}$ has 
 degree $2$ and that these surfaces  are quotient of  the product of
two curves by the action of a finite group.
 He also studied the moduli space and showed that 
 these surfaces in fact exist.

 In \cite{Po} it is also claimed that 
 if the bicanonical map of a minimal surface 
with $p_{g}=q=1$ and $K_{S}^2=8$
factors through a double cover onto 
a surface $X$, then  $X$ is rational. 
For this assertion  the author  refers to 
a theorem by Xiao (\cite[Theorem 3]{X2}), which unfortunately
has a gap, as pointed out by Rito (cf. \cite{Ri}).

\bigskip
In the present paper we prove the following.
\begin{theorem}  Ê  \label{main th}
Let $S$ be a minimal surface of general type
with $p_{g}=q=1$ and $K_{S}^2=8$.
If the bicanonical map of $S$ is nonbirational 
then $\varphi_{2K}$ is a double cover onto a rational surface.
\end{theorem}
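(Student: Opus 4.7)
The plan is to bound $d:=\deg\varphi_{2K}$ and analyse the bicanonical image $\Sigma:=\varphi_{2K}(S)$ in two cases, according to whether $S$ presents the standard case. From $\chi(\mathcal{O}_{S})=1$, Riemann--Roch gives $h^{0}(2K_{S})=9$, so $\Sigma\subset\mathbb{P}^{8}$ is a nondegenerate surface of degree $32/d$. Since a nondegenerate surface in $\mathbb{P}^{8}$ has degree at least $7$, one has $d\in\{2,3,4\}$.

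In the standard case, $S$ carries a genus--$2$ fibration $f\colon S\to B$ with $g(B)\leq q(S)=1$; the map $\varphi_{2K}$ factors through the fibre-wise hyperelliptic involution of $f$, so $\Sigma$ is birationally dominated by a ruled surface over $B$. The task is to show $\Sigma$ is rational. If $B=\mathbb{P}^{1}$ this is immediate. If instead $B$ is the Albanese elliptic curve $E$, then $f$ coincides with the Albanese map $\alpha$; I would combine Xiao's slope inequality $K_{S/E}^{2}\ge 2\deg\alpha_{*}\omega_{S/E}$ together with the relation $\deg\alpha_{*}\omega_{S/E}=\chi(\mathcal{O}_{S})=1$, the value $K_{S/E}^{2}=K_{S}^{2}=8$, and Serrano's classification of isotrivial fibrations to show that the bicanonical image is rational after all (the fibres of $\alpha$ get identified to a single rational curve in $\Sigma$).

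In the non-standard case, the general theory of the bicanonical map produces a birational involution $\sigma$ on $S$ through which $\varphi_{2K}$ factors, forcing $d=2$; let $Y$ be a smooth model of $S/\sigma$. To show $Y$ is rational, assume the contrary. From $q(Y)\leq q(S)=1$ and non-rationality, $Y$ is birationally ruled over $E=\mathrm{Alb}(S)$, and the Albanese factors as $S\to Y\to E$. Hence $\sigma$ preserves the fibres of $\alpha$; on a general fibre $F$ of genus $g$, the involution $\sigma$ has rational quotient, so $F$ must be hyperelliptic with $\sigma$ acting as the hyperelliptic involution. Since we are in the non-standard case, $g\neq 2$, so $g\ge 3$.

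The main obstacle is ruling out this remaining configuration: a hyperelliptic Albanese fibration of genus $g\ge 3$ with $K_{S}^{2}=8$ and $\chi(\mathcal{O}_{S})=1$. This is precisely the point at which Xiao's argument in \cite{X2} has the gap noted in \cite{Ri}. My approach would combine the double-cover invariant formulas $2\chi(\mathcal{O}_{S})=2\chi(\mathcal{O}_{Y})+L\cdot(L+K_{Y})$ and $K_{S}^{2}=2(K_{Y}+L)^{2}$ (with $2L$ the branch divisor of $\pi\colon S\to Y$), Xiao's slope inequality $K_{S/E}^{2}\ge\tfrac{4(g-1)}{g}\deg\alpha_{*}\omega_{S/E}=\tfrac{4(g-1)}{g}$, and the observation that the restriction of $|2K_{S}|$ to a general fibre $F$ realises the bicanonical linear series of the hyperelliptic curve $F$, of degree $4(g-1)$. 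A careful numerical analysis of these constraints, together with the fact that $Y\to E$ is itself ruled and so has very restricted invariants, should force $g=2$, contradicting the non-standard hypothesis and concluding the proof.
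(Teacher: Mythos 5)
Your proposal has genuine gaps, and the two most serious ones are exactly the points this paper exists to address. First, you never rule out $\deg\varphi_{2K}=4$. You assert that in the non-standard case ``the general theory of the bicanonical map produces a birational involution $\sigma$ through which $\varphi_{2K}$ factors, forcing $d=2$,'' but no such general statement is available (for $p_g=q=0$, $K^2=6$ the bicanonical degree can in fact be $4$). From $32=d\cdot\deg S_2$ and the bound $\deg S_2\geq 7$ for a nondegenerate surface in $\mathbb{P}^8$ one gets $d=2$ or $4$ (your $d=3$ is already impossible because $3\nmid 32$), and the case $d=4$ is the content of Theorem \ref{thm d4}: one classifies the degree-$8$ image in $\mathbb{P}^8$ (elliptic cone, Veronese of a quadric, del Pezzo of degree $8$) and, in each surviving case, builds an auxiliary double cover $X\to S$ with $q(X)=4$, extracts fibrations on $X$ via Castelnuovo--de Franchis and Serrano's structure theorem, and concludes that $\varphi_{2K}$ would factor through a degree-two map onto a rational surface --- contradicting Polizzi's theorem \cite{Po} that such a factorization forces $\deg\varphi_{2K}=2$. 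None of this is optional; it is roughly half the paper.

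Second, in the $d=2$ case your step ``from $q(Y)\leq 1$ and non-rationality, $Y$ is birationally ruled over $E$'' is false, and it reproduces precisely the gap in \cite[Theorem 3]{X2} that Rito pointed out in \cite{Ri}: a non-rational quotient with $q\leq 1$ can a priori have nonnegative Kodaira dimension, and the one case surviving Rito's analysis is $Y$ birational to a $K3$ surface. Proposition \ref{polizzi} disposes of that case not by fibration arguments but by a count of curves: the isolated fixed points of $\sigma$ give $\nu-1$ pairwise disjoint $(-2)$-curves on the minimal model of the quotient, forcing $\nu\leq 17$ on a $K3$, while \cite[Proposition 1.8]{B} gives $\nu=K_S^2-2\chi(S)+6\chi(\Sigma)=18$. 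Your concluding numerical step also cannot succeed as described: Xiao's slope inequality here reads $8=K_{S/E}^2\geq \tfrac{4(g-1)}{g}$, which holds for every $g$, so it cannot ``force $g=2$''; and since surfaces with these invariants carrying hyperelliptic genus-$3$ fibrations do exist (Polizzi), no contradiction is available along that route. A smaller point: by Lemma \ref{lmX} every fibration on such an $S$ has fibre genus $\geq 3$, so the standard case never occurs and your first paragraph is vacuous.
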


An application of this theorem is the 
 complete classification of
surfaces with $p_{g}=q=1$,  $K_{S}^2=8$ 
and nonbirational bicanonical map (see Theorem \ref{main th2}).

\bigskip

The paper is organized as follows.
In Section \ref{sec1} we recall 
some facts which will be used in the paper. 
In Section \ref{sec2} we first fix the gap in \cite{Po}, 
and the we study the case when the bicanonical 
map has degree $4$.
In the last section, first we
 prove Theorem \ref{main th}
and then, as application, we provide the classification of 
surfaces with $p_{g}=q=1$, $K_{S}^2=8$
and nonbirational bicanonical map.

\bigskip
\subsection{Notation} 
Given a smooth surface  $X$, we denote 
by $K_{X}$ a canonical divisor of $X$ and
use the standard    notation
$p_{g}(X)=h^0(K_{X})$, $q(X)=h^1(K_{X})$ and
$\chi(X)=p_{g}(X)-q(X)+1$.
Throughout the paper $S$ is a minimal surface of general type.
For brevity sake, we  let $p_{g}=p_{g}(S)$ and $q=q(S)$.
We use the symbol $\equiv$ (resp. $\sim$) to 
denote linear  (numeric) equivalence
of divisors.
As usual, a {\em $(-n)$-curve} 
on a surface is a smooth rational curve with 
self intersection~$-n$.

A {\em fibration}  is a surjective morphism
from a smooth surface onto a smooth curve 
with connected fibers. 
A fibration is {\em isotrivial } if the smooth fibers are all isomorphic.
A fibration is said 
{\em relatively minimal} if the fibers contain no $(-1)$-curves.

Given a surface $X$ and an effective  
divisor $D\in Pic(X)$, we denote by
$\varphi_{D}:X\dashrightarrow \mathbb P^{h^0(D)-1}$ the rational map defined by 
the complete linear system $\vert D\vert$.

We will use freely the facts concerning involutions
on surfaces and on double covers between 
surfaces. We refer to \cite[section 1.1]{B}, from where we keep
some definition (for instance, of $[r,r]$-points in the proof of Proposition \ref{polizzi}).

\section{useful facts}  \label{sec1}

We gather same facts which will be used throughout. 
\smallskip
\begin{lemma}         \label{LCdF}
Let $X,Y$ be smooth surfaces and
 $X\to Y$  a double cover.
 If $p_{g}(Y)=1$ and $q(X)-q(Y)\geq 3$,
 then there exists a fibration 
$X\to C$ onto a  smooth curve of genus $g(C)\geq 2$.
\end{lemma}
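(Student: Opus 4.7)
The plan is to exploit the covering involution and reduce the statement to the classical Castelnuovo--de Franchis theorem (presumably the source of the label ``LCdF''). Let $\sigma\colon X\to X$ be the involution associated with the double cover $\pi\colon X\to Y$. The induced action $\sigma^{*}$ on the Hodge groups $H^{0}(\Omega^{i}_{X})$ splits them into a $(+1)$-eigenspace $H^{0}(\Omega^{i}_{X})^{+}$ and a $(-1)$-eigenspace $H^{0}(\Omega^{i}_{X})^{-}$, and pullback along $\pi$ identifies the invariant part with $H^{0}(\Omega^{i}_{Y})$. Consequently
\[
\dim H^{0}(\Omega^{1}_{X})^{-} = q(X)-q(Y)\geq 3, \qquad \dim H^{0}(\Omega^{2}_{X})^{+}=p_{g}(Y)=1.
\]

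Since the wedge of two anti-invariant $1$-forms is $\sigma^{*}$-invariant, I fix a non-zero $\omega_{1}\in H^{0}(\Omega^{1}_{X})^{-}$ and consider the linear map
\[
H^{0}(\Omega^{1}_{X})^{-}\longrightarrow H^{0}(\Omega^{2}_{X})^{+}, \qquad \omega\longmapsto \omega_{1}\wedge\omega.
\]
Its image has dimension at most $1$, so its kernel has dimension at least $\dim H^{0}(\Omega^{1}_{X})^{-}-1\geq 2$. Since $\omega_{1}$ itself lies in this kernel, I can choose $\omega_{2}$ in the kernel that is not a scalar multiple of $\omega_{1}$. Then $\omega_{1},\omega_{2}\in H^{0}(\Omega^{1}_{X})$ are linearly independent and satisfy $\omega_{1}\wedge\omega_{2}=0$.

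At this stage the Castelnuovo--de Franchis theorem applies and produces a surjective morphism with connected fibers $f\colon X\to C$ onto a smooth curve of genus $g(C)\geq 2$ such that $\omega_{1},\omega_{2}\in f^{*}H^{0}(\Omega^{1}_{C})$; this is the fibration required by the statement.

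The argument does not encounter any significant obstacle. The only place where the numerical hypothesis $q(X)-q(Y)\geq 3$ is genuinely used is in producing the pair of linearly independent anti-invariant $1$-forms with vanishing wedge product: if $\dim H^{0}(\Omega^{1}_{X})^{-}$ were only $2$, the wedge of a basis could be any element of the $1$-dimensional target $H^{0}(\Omega^{2}_{X})^{+}$ and need not vanish, so the Castelnuovo--de Franchis input would fail.
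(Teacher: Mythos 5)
Your proof is correct and follows essentially the same route as the paper: decompose $H^0(\Omega^1_X)$ into $\sigma$-eigenspaces, observe that the anti-invariant part has dimension $q(X)-q(Y)\geq 3$ while the invariant part of $H^0(\Omega^2_X)$ has dimension $p_g(Y)=1$, and conclude by Castelnuovo--de Franchis. If anything, your version is slightly more careful than the paper's, which only asserts that $\bigwedge^2 V^-\to H^0(\Omega^2_X)^G$ fails to be injective, whereas you explicitly extract a decomposable element $\omega_1\wedge\omega_2=0$ with $\omega_1,\omega_2$ linearly independent --- precisely the input the classical Castelnuovo--de Franchis lemma requires.
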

\begin{proof}
Let $\sigma$ be the involution induced 
on $X$ by the double cover, 
and   $G\subseteq Aut (X)$
 the group of automorphisms 
genereted by $\sigma$,
so that $X/G=Y$. 
For any $p$ there exists
an isomorphism 
$H^0(\Omega_{X}^p)^G\cong H^0(\Omega_{Y}^p)$,
where $H^0(\Omega_{X}^p)^G\subseteq H^0(\Omega_{X}^p)$
is the subspace of $G$-invariant $p$-forms.

Let $V^-\subset H^0(\Omega_{X}^1)$ be the subspace 
of $G$-anti invariant $1$-forms, so that 
$H^0(\Omega_{X}^1)=V^-\oplus H^0(\Omega_{X}^1)^G$.
 Then $\dim V^-=q(X)-\dim H^0(\Omega_{X}^1)^G=
 q(X)-q(Y)\geq 3$ and, since $p_{g}(Y)=1$, the map 
 $\bigwedge ^2V^-\to H^0(\Omega_{X}^2)^G\cong H^0(\Omega^2_{Y})$
 is not injective.
Therefore, by the Castelnuovo-de Franchis' Theorem, there is 
a fibration
$X\to C$ where $C$ is a curve of genus $g(C)\geq 2$.
\end{proof}

We will use the following results, usually with 
no further reference.
\begin{proposition}[cf. \cite{Be1} or \cite{Be4}]       \label{beau}
Let $X$ be a smooth  surface and $f:X\to C$ a relatively minimal fibration
onto a curve of genus $b$.
Let $F$ be a general fiber of $f$ and $g$ its genus. 
Then
\begin{enumerate}
\item $K_{X}^2\geq 8(g-1)(b-1)$;
\item $12\chi (X)-K_{X}^2
\geq 4(g-1)(b-1)$;
\item $q(X)\leq g+b$.
\end{enumerate}
 
Furthermore, if equality holds in $(1)$
then the fibers of $f$ have constant modulus,
if equality holds in $(2)$ every fiber of $f$ is
smooth, and if equality holds in $(3)$ then
$X\cong F\times C$.
\end{proposition}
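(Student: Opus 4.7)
Since $p_g=q=1$ and $K_S^2=8$, Riemann-Roch gives $h^0(2K_S)=\chi(\mathcal{O}_S)+K_S^2=9$, so $\varphi_{2K}:S\dashrightarrow \mathbb{P}^8$ is generically finite with surface image $X$. Write $d:=\deg \varphi_{2K}\geq 2$. The plan is to prove that $d=2$ and that $X$ is rational.

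I would first dispose of the case $d=2$. In this case $\varphi_{2K}$ factors canonically through the quotient $S\to S/\sigma$ by the bicanonical involution $\sigma$, so $X$ is the image of a double cover whose source is $S$. The rationality of $X$ is then exactly the statement of Proposition \ref{polizzi}, which fixes the gap in \cite{Po}. Hence, in the case $d=2$, the theorem is immediate from the first part of Section \ref{sec2}.

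Next I need to exclude $d\geq 3$. A standard argument using the pencil of fibres of $\varphi_{2K}$ over a general line of the image, combined with Proposition \ref{beau} and the hypothesis $K_S^2=8$, restricts $d$ to $\{2,4\}$; it thus suffices to rule out $d=4$. When $d=4$, the Galois closure of $\varphi_{2K}$ produces a subgroup of $\mathrm{Aut}(S)$ containing several commuting bicanonical involutions. For any such involution $\sigma$ with $p_g(S/\sigma)=1$, Lemma \ref{LCdF} forces $q(S)-q(S/\sigma)\leq 2$, for otherwise $S$ would admit a fibration over a curve of genus $\geq 2$, contradicting $q(S)=1$. Combining this with Proposition \ref{beau} applied to the Albanese fibration $S\to \mathrm{Alb}(S)$ (an elliptic pencil, since $q=1$) and with the structure of the degree-$4$ bicanonical map analyzed in Section \ref{sec2}, one obtains a numerical contradiction with $K_S^2=8$.

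The main obstacle is the exclusion of $d=4$. Unlike the $p_g=q=0$ setting of Mendes Lopes-Pardini, here one has a nontrivial Albanese pencil and one must simultaneously track the commuting bicanonical involutions and their interaction with this pencil. Carrying out the resulting numerical bookkeeping, in particular identifying the quotients $S/\sigma$ precisely enough to apply Lemma \ref{LCdF} and Proposition \ref{beau} sharply, is the technical heart of the plan and the content of the second half of Section \ref{sec2}.
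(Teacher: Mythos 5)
Your proposal does not prove the statement it was supposed to prove. The statement in question is Proposition \ref{beau}: a general result about an arbitrary relatively minimal fibration $f:X\to C$ of a smooth surface over a curve of genus $b$ with general fibre of genus $g$, asserting the Arakelov-type inequality $K_X^2\geq 8(g-1)(b-1)$, the inequality $12\chi(X)-K_X^2\geq 4(g-1)(b-1)$, the bound $q(X)\leq g+b$, and the characterizations of the equality cases. What you have written is instead a sketch of the proof of Theorem \ref{main th} (the non-birationality/degree analysis of the bicanonical map of a surface with $p_g=q=1$, $K_S^2=8$); nothing in your text engages with the fibration inequalities at all. Note also that the paper itself does not reprove Proposition \ref{beau}: it is quoted from Beauville (the appendix to Debarre's paper and the book \emph{Complex algebraic surfaces}), so the correct "proof" here is essentially a citation, or a reproduction of Beauville's arguments.

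For the record, an actual proof would go roughly as follows. Writing $K_{X/C}=K_X-f^*K_C$, one has $K_{X/C}^2=K_X^2-8(g-1)(b-1)$, so $(1)$ is the statement $K_{X/C}^2\geq 0$ (Arakelov), with equality forcing constant modulus of the fibres. For $(2)$, Noether's formula gives $12\chi(X)=K_X^2+e(X)$, and the Zeuthen--Segre type formula $e(X)=4(g-1)(b-1)+\sum_s\bigl(e(F_s)-e(F)\bigr)$ with each local term non-negative (and zero exactly for smooth fibres when $g\geq 1$) yields the inequality and its equality case. Part $(3)$, $q(X)\leq g+b$ with equality only for a product, is Beauville's appendix result, proved by comparing $H^0(\Omega^1_X)$ with the pullbacks from $C$ and the restriction to a general fibre. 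If your assignment was in fact Theorem \ref{main th}, your sketch would still be inadequate there (for instance, the invocation of Lemma \ref{LCdF} with $q(S)-q(S/\sigma)\leq 2$ is vacuous since $q(S)=1$, and the "Galois closure producing commuting involutions" step is not how the paper excludes $d=4$; the paper instead classifies the degree-$8$ image surface in $\mathbb{P}^8$ and analyzes each case), but the primary defect is that you have proved the wrong statement.
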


\begin{proposition}[cf. \cite{Se}]    \label{serr}
Let $f:X\to C$ be a relatively minimal isotrivial fibration, with
general fiber a curve  $F$ of genus $g\geq1$.
Then there exist a curve $A$ and a finite group
$G$ acting faithfully and holomorphically on
$A$ and $F$, such that $C\cong A/G$, and there is
a birational map $\pi:X\to (A\times F)/G$
(where $G$ acts diagonally on $A\times F$)
for which the following diagram commutes
$$
  \xymatrix{
 X  \ar[rr]^\pi  \ar[dd]_{f}    
&& (A\times F)/G \ar[dd]
\\
\\
B\ar[rr]^\cong && A/G 
}
$$
Furthermore,
\begin{enumerate}
\item 
$q(X)=g(A/G)+g(F/G)$;
\item
the natural map 
  $(A\times F)/G\to F/G$ give rise to
an  isotrivial fibrations
   and  $f_{1}:X\to F/G$ whose general fiber is isomorphic 
   to $A$;
\item
 if the only singular fibers of $f$ are multiples 
of smooth curves, then $\pi$ is an isomorphism and
$G$ operates on $A\times F$ without fixed points
(i.e. $(A\times F)\to X$ is \'etale).
\end{enumerate}
\end{proposition}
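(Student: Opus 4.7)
The plan is to construct the curve $A$ and the group $G$ by a monodromy/trivialization argument, then verify the three listed properties.

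First, I would exploit that every smooth fiber of $f$ is isomorphic to the fixed curve $F$. When $g\geq 2$, $\mathrm{Aut}(F)$ is finite, so restricting $f$ over the Zariski open $C^{\circ}\subset C$ where $f$ is smooth produces a family classified by a monodromy representation $\pi_{1}(C^{\circ})\to\mathrm{Aut}(F)$ with finite image. Let $A^{\circ}\to C^{\circ}$ be the finite Galois cover killing this monodromy; then $X\times_{C}A^{\circ}\cong A^{\circ}\times F$. Completing $A^{\circ}$ to a smooth projective curve $A$ and taking a smooth birational model, one obtains a birational identification between $X\times_{C}A$ and $A\times F$. Setting $G:=\mathrm{Gal}(A/C)$, the group $G$ acts on $A$ by deck transformations and, through its image in $\mathrm{Aut}(F)$, on $F$; by construction the action on $A\times F$ is diagonal. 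Descent along $A\to C$ produces the birational map $\pi:X\dashrightarrow(A\times F)/G$ with $C\cong A/G$. The case $g=1$ requires a preliminary reduction: here $\mathrm{Aut}(F)$ is infinite, so one first twists by the relative Jacobian and a section-producing cover to kill the translation part of the monodromy, after which the argument above applies to the (now finite) residual automorphism group.

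For assertion (1), I would use Künneth to split $H^{0}(\Omega^{1}_{A\times F})^{G}=H^{0}(\Omega^{1}_{A})^{G}\oplus H^{0}(\Omega^{1}_{F})^{G}$, and then identify $H^{0}(\Omega^{1}_{A})^{G}\cong H^{0}(\Omega^{1}_{A/G})$ and $H^{0}(\Omega^{1}_{F})^{G}\cong H^{0}(\Omega^{1}_{F/G})$ through the standard isomorphism between $G$-invariant forms on a curve and forms on the quotient. Since $q$ is a birational invariant, this yields $q(X)=g(A/G)+g(F/G)$. For assertion (2), the second projection $A\times F\to F$ is $G$-equivariant, so it descends to a morphism $(A\times F)/G\to F/G$; composing with $\pi$ gives the claimed fibration $f_{1}:X\dashrightarrow F/G$, and over a point of $F/G$ whose preimage in $F$ has trivial stabilizer the fiber is exactly $A$. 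Isotriviality of $f_{1}$ follows by the symmetry of the construction in the two factors.

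For (3) I would argue as follows. Any point of $A\times F$ with nontrivial stabilizer in $G$ projects, via the first projection to $A/G\cong C$, to a point whose fiber in $(A\times F)/G$ contains a quotient singularity and, after minimal resolution, produces a reducible fiber or a fiber with a reduced component of multiplicity one alongside others, contradicting the hypothesis that every singular fiber of $f$ is a multiple of a smooth curve. Hence $G$ acts freely on $A\times F$, so $A\times F\to(A\times F)/G$ is étale and $(A\times F)/G$ is smooth. Then $\pi$ is a birational map between smooth surfaces that is fibrewise an isomorphism away from the multiple fibers, and relative minimality of $f$ together with the absence of exceptional curves in the resolution forces $\pi$ to be an isomorphism. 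The most delicate step of the entire argument is the monodromy reduction that yields $A$ and $G$, particularly the separate handling of the $g=1$ case where the relative Jacobian must be invoked; the verification of (1)--(3) is then essentially formal once the diagonal quotient model has been constructed.
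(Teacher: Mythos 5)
The paper contains no proof of this proposition---it is Serrano's structure theorem for isotrivial fibred surfaces, quoted from \cite{Se}---so the only meaningful comparison is with Serrano's own argument, which is indeed the monodromy/trivialization route you follow. Your construction of $A$ and $G$ for $g\geq 2$ (finite monodromy image in $\mathrm{Aut}(F)$, Galois cover killing it, descent of the trivialization), the K\"unneth computation for (1), and the descent of the second projection for (2) are correct and standard. For $g=1$, drop the relative Jacobian: what actually makes the monodromy image finite is algebraicity of $X$ via a multisection. An irreducible multisection $D\to C$ gives, after base change, a family with a section, so the monodromy image of the finite-index subgroup $\pi_1(D^\circ)\subseteq\pi_1(C^\circ)$ fixes a point of $F$ and is therefore finite; a group with a finite subgroup of finite index is finite. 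You gesture at this (``section-producing cover''), but it should be said explicitly, as it is the one place where projectivity of $X$ enters.

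The genuine gap is in (3). You argue that a point of $A\times F$ with nontrivial stabilizer produces a quotient singularity on a fibre of $(A\times F)/G$, whose minimal resolution has a fibre that is not a multiple of a smooth curve. But $X$ is not that resolution: it is its \emph{relatively minimal model} (unique since $g\geq 1$), and the blow-downs from the resolution to $X$ could a priori contract the entire exceptional configuration---note that $F/G_a$ may be rational with strict transform a $(-1)$-curve---so one cannot conclude that the fibre of $f$ itself fails to be a multiple of a smooth curve. Your parenthetical claim that the resolved fibre contains ``a reduced component of multiplicity one'' is also unjustified: the multiplicities of the Hirzebruch--Jung curves in the fibre must be computed and need not include $1$. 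The standard way to close this is local and bypasses the resolution: if the fibre of $f$ over $c$ is $mD$ with $D$ smooth, the normalization of the base change of $X$ along $s\mapsto s^m$ is smooth and \'etale over $X$ near that fibre, with deck group the local monodromy group $G_a\cong\mathbb{Z}/m$ acting on $\Delta\times F$; \'etaleness forces every nontrivial element of $G_a$ to act freely on $F$. Applying this at every singular fibre shows that $G$ acts freely on $A\times F$, after which your uniqueness-of-relatively-minimal-models argument does yield that $\pi$ is an isomorphism.
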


\begin{lemma}  \label{lmX}
Let $S$ be minimal surface of general type with $p_{g}=q=1$ 
and $K_{S}^2=8$.  
Let $S\to C$ be a fibration. Let $b$ the genus of $C$ and $g$ the genus
of a general fiber.

Then  $g\geq3$ $($\cite[Th\'eor\`eme 2.2 and Corollaire 3]{X3}$)$
and $b\leq 1$ $($cf. for instance \cite{Be4}$)$.
In particular, if $b=1$ then $C$ is the Albanese variety 
of $S$ and $S\to C$ is the Albanese morphism.
\end{lemma}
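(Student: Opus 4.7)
My plan is to split the lemma into three parts and handle them in order.

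First, the inequality $g\geq 3$. This is a restriction on fibrations of minimal surfaces of general type with $p_{g}=q=1$ and $K_{S}^{2}=8$ that I would simply cite from Xiao, namely Th\'eor\`eme~2.2 and Corollaire~3 of \cite{X3}. As a quick sanity check on the low-genus exclusions, $g=0$ is incompatible with $S$ being of general type, and $g=1$ would give a smooth fiber $F$ with $F^{2}=0$ and $K_{S}\cdot F=0$ while $F$ is not numerically proportional to $K_{S}$, violating Hodge index since $K_{S}^{2}=8>0$. The substantive content is therefore the exclusion of genus~$2$ fibrations, which is exactly what Xiao's theorem provides.

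For $b\leq 1$ the argument is purely formal. The fibration $f\colon S\to C$ induces a morphism of abelian varieties $f_{*}\colon \mathrm{Alb}(S)\to \mathrm{Alb}(C)$; since the Abel--Jacobi image of $C$ generates $\mathrm{Alb}(C)$ and lifts into the image of $f_{*}$, the map $f_{*}$ is surjective. Combined with $q(S)=\dim\mathrm{Alb}(S)=1$ this yields $b=g(C)=\dim\mathrm{Alb}(C)\leq 1$.

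For the final ``in particular'' statement I would assume $b=1$, so that $C$ is an elliptic curve. Then $f$ factors as $S\xrightarrow{\alpha}\mathrm{Alb}(S)\xrightarrow{f_{*}} C$ with $f_{*}$ an isogeny of elliptic curves. I would first argue that the Albanese map $\alpha$ itself has connected fibers: its Stein factorization $S\to C'\to \mathrm{Alb}(S)$ would exhibit $C'$ as a curve with a finite map onto an elliptic curve, and applying the universal property of the Albanese to $S\to C'$ forces $\mathrm{Alb}(C')\cong \mathrm{Alb}(S)$ and hence $C'=\mathrm{Alb}(S)$. The connected fibers of $f$ then decompose as disjoint unions of $\deg(f_{*})$ many connected fibers of $\alpha$, and the connectedness of the former forces $\deg(f_{*})=1$. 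Hence $f_{*}$ is an isomorphism, $C\cong \mathrm{Alb}(S)$, and $f$ is the Albanese morphism.

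The only step requiring any care is this last identification of $C$ with $\mathrm{Alb}(S)$, which ultimately comes down to comparing connected components of fibers under the isogeny $f_{*}$. Everything else is either a direct appeal to Xiao's classification or an immediate application of the universal property of the Albanese variety, and I expect no further obstacles.
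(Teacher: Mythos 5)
Your proposal is correct and follows the same route as the paper, which offers no proof beyond the citations embedded in the statement: Xiao's Th\'eor\`eme~2.2 for the exclusion of genus-$2$ pencils, and standard Albanese theory for $b\leq 1$ and the identification of a genus-$1$ base with $\mathrm{Alb}(S)$. The details you supply (Hodge index to rule out $g=0,1$, surjectivity of $\mathrm{Alb}(S)\to \mathrm{Alb}(C)$ giving $b\leq q=1$, and the connectedness-of-fibers argument forcing the isogeny $\mathrm{Alb}(S)\to C$ to have degree one) are exactly the standard arguments the references are gesturing at, and they are sound.
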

\section{
The bicanonical map of surfaces with $K_{S}^2=8$
}   \label{sec2}
Throughout this section let $S$
be a minimal surface of general type with
$p_{g}=q=1$, $K_{S}^2=8$ and nonbirational bicanonical map.
Let 
$\varphi _{2}=\varphi_{2K_{S}}:S\dashrightarrow S_{2}\subset \pp^8$ 
be the  bicanonical map  of $S$.
Then $\varphi _{2}$ is a morphism (cf. \cite{R}) 
and, since  by \cite{Mi2} $K_{S}$ is ample,
it is finite. Let $d\ge 2$ be the degree of $\varphi _{2}$ and $\text{deg}(S_{2})$
the degree of $S_{2}$. Then $(2K_{S})^2=d\cdot \text{deg}(S_{2})$
and so $d=2$ or $4$, because an irreducible non degenerate surface in $\mathbb P^n$ has 
degree at least $n-1$.

\subsection{The case when $\varphi_{2}$ factors through an involution.}
The following proposition fix the gap of \cite{Po} (cf. the introduction). 

\begin{proposition}  \label{polizzi}
Let $S$ be a minimal surface of general type
with $p_{g}=q=1$ and $K_{S}^2=8$. 
Suppose that the bicanonical map of $S$ factors through 
an involution $\sigma$. 
Then the quotient $S/\sigma$ is birational to a rational surface.
\end{proposition}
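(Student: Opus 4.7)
The plan is to argue by contradiction: assume $W=S/\sigma$ is not birational to a rational surface, and deduce an impossibility. Let $\pi:\tilde S\to\tilde W$ be the smooth double cover obtained after blowing up $S$ at the isolated fixed points of $\sigma$ and resolving the singularities of the quotient, and let $L\in\mathrm{Pic}(\tilde W)$ be the line bundle with $\pi_*\calO_{\tilde S}=\calO_{\tilde W}\oplus L^{-1}$ and branch divisor $B\sim 2L$. The preliminary observation is that the hypothesis \emph{``$\varphi_{2K}$ factors through $\sigma$''} is equivalent to the vanishing $h^0(\tilde W,2K_{\tilde W}+L)=0$ of the $\sigma$-antiinvariant subspace of $H^0(S,2K_S)$. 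Combined with the standard double-cover identities
\[
p_g(\tilde W)+h^0(K_{\tilde W}+L)=1,\qquad q(\tilde W)+h^1(L^{-1})=1,
\]
and $P_2(\tilde S)=h^0(2K_{\tilde W}+2L)+h^0(2K_{\tilde W}+L)=9$, one obtains a rigid numerical picture, with only a handful of possibilities for $(p_g(\tilde W),q(\tilde W))$.

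I would then rule out the non-rational alternatives in three stages. First, exclude $q(\tilde W)=1$: by functoriality of the Albanese, this would force the Albanese of $\tilde W$ to land in $E=\mathrm{Alb}(S)$, so $\sigma$ preserves every fibre of the Albanese fibration $\alpha:S\to E$ from Lemma \ref{lmX} (whose fibres have genus $g\geq 3$). The induced fibration $\tilde W\to E$ over an elliptic curve then has fibres of genus $g'=g(F/\sigma|_F)$, and the branch divisor $B$ is supported over the branch locus of the fibrewise involutions. Using $K_{\tilde S}^2=2(K_{\tilde W}+L)^2=8-k$ (where $k$ is the number of isolated fixed points of $\sigma$) and the vanishing $h^0(2K_{\tilde W}+L)=0$, one aims to produce a numerical contradiction for the fibration $\tilde W\to E$. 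Second, exclude $p_g(\tilde W)=1$ with $q(\tilde W)=0$, i.e.\ $h^0(K_{\tilde W}+L)=0$: here the minimal model of $\tilde W$ is either a $K3$ surface or a minimal surface of general type with $p_g=1$, $q=0$; in each subcase the pair of vanishings $h^0(K_{\tilde W}+L)=h^0(2K_{\tilde W}+L)=0$ forces $L$ to be sufficiently negative to contradict $h^0(2K_{\tilde W}+2L)=9$. Third, once $p_g(\tilde W)=q(\tilde W)=0$, Castelnuovo's theorem reduces rationality to $P_2(\tilde W)=0$, which follows by combining $h^0(2K_{\tilde W}+L)=0$ with the effectivity of $B=2L$: a nonzero section of $2K_{\tilde W}$ together with a section cut out by $B$ would produce a nonzero section of $2K_{\tilde W}+L$.

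The main obstacle I expect is the first stage. The inequalities of Proposition \ref{beau} are vacuous on an elliptic base ($b=1$), so the desired contradiction cannot be extracted directly from them; instead it will have to arise from a finer analysis of the action of $\sigma$ on a general Albanese fibre and of the corresponding structure of $B$ on $\tilde W$, along the lines of the $[r,r]$-point calculus referenced in the notation of the paper. One plausible route, if the direct numerical argument is too delicate, is to apply Proposition \ref{serr} to an auxiliary isotrivial fibration on $\tilde W$ so as to pin down the structure of $\sigma$ finely enough to contradict the equalities $p_g=q=1$ and $K_S^2=8$ on $S$.
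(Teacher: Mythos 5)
Your framework is sound as far as it goes: the factorization of $\varphi_{2K}$ through $\sigma$ is indeed equivalent to the vanishing $h^0(2K_{\tilde W}+L)=0$ of the anti-invariant part of $H^0(2K_{\tilde S})$, and the double-cover identities you write down are correct and do confine $(p_g(\tilde W),q(\tilde W))$ to four possibilities. But the proposal does not actually close any of its three stages, and the one step you spell out in detail is wrong. In your third stage, multiplying a nonzero section of $2K_{\tilde W}$ by the section of $\mathcal{O}_{\tilde W}(B)=\mathcal{O}_{\tilde W}(2L)$ cutting out the branch divisor produces a section of $2K_{\tilde W}+2L$, not of $2K_{\tilde W}+L$; to land in $H^0(2K_{\tilde W}+L)$ you would need $h^0(L)>0$, which does not follow from the effectivity of $2L$. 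This matters because Enriques and general-type quotients with $p_g=q=0$ and $P_2>0$ are genuine possibilities that must be excluded by a real argument. Your second stage is only the hope that ``$L$ is sufficiently negative,'' with no mechanism (and it omits the properly elliptic case with $p_g=1$, $q=0$), and you yourself flag the first stage as unresolved. So what you have is a correct reduction to a finite list of numerical types together with no tool for eliminating the non-rational ones.

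The missing tool is exactly what the paper's proof supplies, and it is of a different nature from anything in your plan. First, the paper does not eliminate the non-rational types case by case: by \cite[Theorem 2.7]{B} a non-rational quotient has non-negative Kodaira dimension, and by \cite{Ri} (using that $S$, having $K_S^2=8$, does not present the standard case) it must then be birational to a K3 surface $\Sigma$, with the pushed-forward branch curve having at most one essential singularity. This disposes of your stages $1$--$3$ at a stroke. Second --- and this is the decisive step with no analogue in your proposal --- the contradiction in the K3 case comes from counting the isolated fixed points $\nu$ of $\sigma$: \cite[Proposition 1.8]{B} gives $\nu=K_S^2-2\chi(S)+6\chi(\Sigma)=8-2+12=18$ once the bicanonical map factors through $\sigma$, whereas the $\nu$ nodal curves these points create on $\hat\Sigma$ descend, by \cite[Lemma 1.4]{B}, to at least $\nu-1$ pairwise disjoint $(-2)$-curves on the minimal K3 model, forcing $\nu\le 17$. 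If you want to salvage your approach you should look for this fixed-point count rather than for a fibration-theoretic contradiction; in particular the Albanese analysis you propose for the $q(\tilde W)=1$ case is not where the obstruction lives.
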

\begin{proof}
Let $\hat \Sigma\to S/\sigma$  be the minimal desingularization,
and suppose 
that $\hat\Sigma$ is not rational.
Then, by \cite[Theorem 2.7]{B}, $\hat\Sigma$ has non negative
Kodaira dimension and thus,
since $S$ does not present the standard case,
it is birational to a $K3$ surface (cf. \cite{Ri}). 

Let $\pi:\hat S\to S$ the blow up of the 
isolated fixed points of $\sigma$, and $\hat \sigma$  
the involution induced on $\hat S$.
Then   the quotient $\hat S/\hat \sigma$ is isomorphic to $\hat \Sigma$ and 
 we have the following commutative diagram 
$$
  \xymatrix{
\hat S  \ar[r]^\pi  \ar[d]_{\rho}  
& S \ar[d]
\\
\hat \Sigma\ar[r]& S/\sigma 
}
$$
where $\rho$ is the canonical projection.
Let $E_{1},\dots,E_{\nu}$ be the exceptional 
$(-1)$-curves of $\pi$, and $C_{i}=\rho(E_{i})$.
Let $\hat B
$ 
be the branch curve of $\rho$. 
Then  $C_{i}$ is  a $(-2)$-curve 
 belonging to $\hat B$ for every $i$.

Let $\phi:\hat \Sigma\to \Sigma$ be the birational
morphism to the minimal model, and $B=\phi_{\ast } \hat B$. 
Then $B\equiv 2\Delta$ for some $\Delta \in Pic(\Sigma)$ 
and, by \cite[proof of Theorem 1]{Ri}, $B$ has 
at most one essential singularity, which is either a $[4]$-point or a  $[3,3]$-point.

Let $\calE_{1},\dots,\calE_{d}$ be the reduced and irreducible 
exceptional curves of $\phi$. Since $\Sigma$ is 
a $K3$ surface, $K_{\hat \Sigma}=\sum e_{j}\calE_{j}$,
where the $e_{j}$'s are positive integers.
Since  the $C_{i}$'s are $(-2)$-curves, we have 
$C_{i}\cdot \sum e_{j}\calE_{j}=0$ for every $i$.
Hence either $C_{i}\cap \sum  \calE_{j}=\emptyset$
or $C_{i}=\calE_{j}$ for some $j$, i.e.
either $\phi(C_{i})$ is a $(-2)$-curve or 
$\phi(C_{i})$ is a point.
Thus,  
by \cite[Lemma 1.4]{B}, there are at least $\nu-1$ pairwise 
disjoined $(-2)$-curves on $\Sigma$ and then
$\nu\leq 17$, because $\Sigma$ is a $K3$ surface.

By assumption, $\varphi_{2}$ factors through $\sigma$. 
Then, by \cite[Proposition 1.8]{B},
we get $ \nu=K_{S}^2-2\chi(S)+6\chi(\Sigma)=18$,
which is a contradiction.
Therefore $S/\sigma$ is birational to a rational surface.
\end{proof}

\subsection{The case when $d=4$}
In this section we prove the following
\begin{theorem}   \label{thm d4}
Let $S$ be a minimal surface of general type with
$p_{g}=q=1$ and $K_{S}^2=8$. 
Suppose that the degree of the bicanonical map is $4$.
Then $\varphi_{2}$ factors through a 
generically finite map of degree two onto 
a rational surface.
\end{theorem}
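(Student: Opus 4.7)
The plan is to produce an involution $\sigma \in \mathrm{Aut}(S)$ through which $\varphi_2$ factors, and then invoke Proposition~\ref{polizzi} to conclude that $S/\sigma$ is birational to a rational surface. Since $S\to S/\sigma$ is generically finite of degree $2$, this yields exactly the factorisation demanded by the theorem. As a preliminary remark, $(2K_S)^2/\deg\varphi_2 = 8$, so $S_2\subset\mathbb{P}^8$ is a non-degenerate surface of degree $8$; its general hyperplane section is then a non-degenerate curve of degree $8$ in $\mathbb{P}^7$, of arithmetic genus at most $1$ by the Castelnuovo bound, so $S_2$ is ruled. This is a useful consistency check for the final conclusion.

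To construct $\sigma$, I would study the Galois closure $\widetilde S\to S_2$ of $\varphi_2$ together with its Galois group $G$, a transitive subgroup of $S_4$. The five possibilities are $\mathbb{Z}/4,\,V_4,\,D_4,\,A_4,\,S_4$. Writing $S = \widetilde S / H$ for the point stabiliser $H$ of index $4$, one computes that the group of deck transformations $N_G(H)/H$ of $S \to S_2$ contains an involution exactly when $G \in \{\mathbb{Z}/4,\,V_4,\,D_4\}$: regularly in the first two cases, and given by the centre of $D_4$ (disjoint from $H$ by transitivity) in the third. In each of these cases we immediately obtain the required $\sigma$ and Proposition~\ref{polizzi} finishes the argument.

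The main obstacle is therefore to exclude $G = A_4$ and $G = S_4$, where $N_G(H) = H$ so that no involution of $S$ commutes with $\varphi_2$. For these two cases I would exploit the constraints $p_g = q = 1$, $K_S^2 = 8$ together with Lemma~\ref{lmX} (every fibration of $S$ has general fibre of genus $g \ge 3$ and base of genus $b \le 1$). The $G$-action on $\widetilde S$ produces a rich family of intermediate quotients and of induced fibrations on $S$ itself, whose numerical invariants can be computed using Propositions~\ref{beau} and~\ref{serr}; controlling the fixed loci of the various subgroups of $G$ on $\widetilde S$ precisely enough should then yield a contradiction with $g\ge 3,\ b\le 1,\ p_g=q=1,\ K_S^2=8$. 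Once $A_4$ and $S_4$ are ruled out, the involution supplied by the abelian/$D_4$ analysis together with Proposition~\ref{polizzi} proves Theorem~\ref{thm d4}.
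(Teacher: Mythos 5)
Your reduction of the statement to the existence of an involution $\sigma$ with $\varphi_2\circ\sigma=\varphi_2$ is legitimate: a generically finite degree-two factorisation of $\varphi_2$ is the same datum as such a $\sigma$, and Proposition~\ref{polizzi} then gives rationality of $S/\sigma$. The group-theoretic bookkeeping is also correct: for the Galois closure $\widetilde S\to S_2$ with group $G$ and point stabiliser $H$, the deck group $N_G(H)/H$ of $S\to S_2$ is nontrivial exactly for $G\in\{\mathbb Z/4,V_4,D_4\}$. (Your opening ``consistency check'' proves less than you suggest: a non-degenerate degree-$8$ surface in $\pp^8$ can be a cone over an elliptic curve, which is ruled but not rational, so sectional genus $\le 1$ does not corroborate the conclusion.) The genuine gap is that the entire content of the theorem is now concentrated in excluding $G=A_4$ and $G=S_4$, and for this you offer only the assertion that controlling fixed loci on $\widetilde S$ ``should'' contradict $p_g=q=1$, $K_S^2=8$ and Lemma~\ref{lmX}. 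That is not an argument. You have no control over the invariants of $\widetilde S$ (its construction requires resolving the Galois closure of a degree-$4$ field extension whose ramification you have not described), and in the $A_4$ and $S_4$ cases there are \emph{no} intermediate fields between $\mathbb C(S_2)$ and $\mathbb C(S)$, so the ``rich family of intermediate quotients'' you invoke to manufacture fibrations on $S$ does not exist where you need it. As written, nothing is proved in the two cases that matter.

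For contrast, the paper never forms the Galois closure. It uses Nagata's classification of degree-$8$ surfaces in $\pp^8$ to pin down $S_2$ (elliptic cone, Veronese image of a quadric, or del Pezzo of degree $8$), eliminates the elliptic cone by a degree count on the induced fibration (Lemma~\ref{lm0}), and in the remaining two cases exploits explicit divisor classes pulled back from $S_2$ --- a nontrivial $2$-torsion class $\eta=H-K_S$ in the quadric case, and the pencil $\hat L$ with $2K_S\equiv 2L+\hat L$ in the del Pezzo case --- to build a concrete double cover $X\to S$ with $q(X)=4$. Castelnuovo--de Franchis (Lemma~\ref{LCdF}) together with Propositions~\ref{beau} and~\ref{serr} then forces $S$ to carry a hyperelliptic genus-$3$ fibration over $\pp^1$, and \cite{B1} converts that into the desired degree-two factorisation. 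Any repair of your route would have to import this kind of specific information about $S_2$ and the ramification of $\varphi_2$ into the $A_4$/$S_4$ analysis; the purely group-theoretic frame cannot produce the contradiction on its own.
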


Assume 
$d=4$.  
 Then $S_{2}$ is a surface of degree $8$ in $\pp^8$, and thus 
 it is one of the following (cf. \cite {Na}).
 \begin{itemize}
\item
A cone over an elliptic curve of degree $8$ in $\mathbb P^7$.
\item
The Veronese embedding in $\mathbb P^8$ of 
a quadric $Q$ in $\mathbb P^3$.
\item
A del Pezzo surface of degree $8$, i.e.
the image of $\mathbb P^2$ by the rational map
associated to the linear system 
$\vert \calO_{\mathbb P^2}( 3)\otimes \mathcal I_{x}\vert$,
where $\mathcal I_{x}$ is the ideal sheaf of
a point in $\mathbb P^2$.
\end{itemize}

We will consider each case separately.
In particular, we are going to show that the first case does 
not occur  and that in both  the others $\varphi_{2}$ factors 
through a double cover onto  a rational surface.  

Theorem \ref{thm d4} follows from Lemma \ref{lm0}, 
Propositions \ref{prop1} and \ref{lm01} below.
\begin{lemma}      \label{lm0}
$S_{2}$ is not a cone over an elliptic curve.
\end{lemma}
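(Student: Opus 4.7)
My plan is to suppose for contradiction that $S_{2}$ is a cone over an elliptic curve $E \subset \pp^{7}$ of degree $8$, and to derive a numerical contradiction from Lemma \ref{lmX}.

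First, the projection from the vertex gives a rational map $\pi : S_{2} \dashrightarrow E$ whose fibres are the rulings, i.e., lines in $\pp^{8}$. Since $\varphi_{2}$ is finite, the preimage of the vertex is a finite subset of $S$ and $\pi \circ \varphi_{2}$ extends to a morphism $S \to E$. Its Stein factorization writes this morphism as $h \circ f$, with $f : S \to E'$ a fibration and $h : E' \to E$ a finite morphism of some degree $d \geq 1$. By Lemma \ref{lmX} the base of any fibration of $S$ has genus $\leq 1$, and the existence of the non-constant morphism $h : E' \to E$ then forces $g(E') = 1$. Hence $f$ is the Albanese morphism of $S$, and its general fibre $F$ is a smooth connected curve of genus $g \geq 3$, again by Lemma \ref{lmX}.

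Next, I compute the degree of $\varphi_{2}|_{F}$. Let $\ell$ be a general ruling, lying over $e \in E$. For each $e' \in h^{-1}(e)$ the fibre $f^{-1}(e')$ is mapped by $\varphi_{2}$ into $\ell \cong \pp^{1}$, and finiteness of $\varphi_{2}$ prevents it from being contracted to a point, so the restriction $\varphi_{2}|_{F} : F \to \ell$ is surjective. Its degree equals $\varphi_{2}^{*}\calO_{\pp^{8}}(1) \cdot F = 2K_{S}\cdot F = 4g-4$ by adjunction and $F^{2}=0$.

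Finally, I count preimages over a general $p \in \ell$. For general $e \in E$ the set $h^{-1}(e)$ consists of $d$ distinct points, and $\varphi_{2}^{-1}(\ell) = f^{-1}(h^{-1}(e))$ is the disjoint union of the $d$ corresponding general fibres of $f$, each mapping onto $\ell$ with degree $4g-4$. So the preimage of $p$ contains $d(4g-4)$ distinct points, and this must equal $\deg \varphi_{2} = 4$. Therefore $d(g-1) = 1$, which is impossible since $d \geq 1$ and $g \geq 3$. The only real obstacle is carrying out this last count over a truly generic ruling so that the $d(4g-4)$ preimages of $p$ really are distinct reduced points, which follows from generic smoothness once the Stein factorization set-up is in place.
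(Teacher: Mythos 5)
Your argument is correct and is essentially the paper's own proof: both project from the vertex, pass to the Stein factorization to get a fibration over a curve of positive genus, invoke Lemma \ref{lmX} to force the fibre genus to be at least $3$ (hence $2K_{S}\cdot F\geq 8$), and then contradict the fact that a ruling pulls back to a divisor of degree $\deg\varphi_{2}\cdot\deg\ell=4$ against $2K_{S}$. The only cosmetic difference is that you phrase the final contradiction as a point count $d(4g-4)\leq 4$ over a general point of a general ruling, while the paper writes $\varphi_{2}^{\ast}\ell\sim aF$ and computes $4=(2K_{S})\cdot(aF)\geq 8a$; these are the same computation.
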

\begin{proof}
Suppose that $S_{2}$ is a cone over an elliptic curve $C$.
Let $\mu:S_{2}\dashrightarrow C$ be the projection
 from the vertex.  
The composite map $f=\mu\circ \varphi_{2}:S\dashrightarrow C$
is surjective and thus a morphism, 
since $C$ has positive genus.

The Stein factorization of $f$ yields a fibration
$\tilde f:S\to \tilde C$ over a smooth curve of genus 
$g(\tilde C)\geq 1$, whose general fiber is a curve $F$
of genus $g(F)$. 
Since $K_{S}^2=8$, by Lemma \ref{lmX}
 we have $g(F)\geq 3$
and thus $F\cdot K_{S}\geq 4$.

Let $\ell=\varphi_{2}(F)$.
Then $\ell$ is a general line on $S_{2}$
and, since $K_{S}$ is ample,  we have 
$\varphi_{2}^{\, \ast} \ell\sim aF$, where 
$a\geq1$ is an integer.
But then we get 
$4=(2K_{S})\cdot (aF)
\geq 8a$,
which is absurd.
\end{proof}

\begin{proposition}     \label{prop1}
Suppose that $S_{2}$ is
the Veronese embedding in $\mathbb P^8$ of 
a quadric $Q$ in~$\mathbb P^3$.
Then the bicanonical map of $S$ factors through a double
cover onto a rational surface.
\end{proposition}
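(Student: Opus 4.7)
The plan is to produce an involution $\sigma$ on $S$ through which $\varphi_2$ factors, and then invoke Proposition \ref{polizzi} to conclude that $S/\sigma$ is birational to a rational surface.

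First I would reduce to the case where $Q$ is smooth, so $Q\cong\mathbb{P}^1\times\mathbb{P}^1$; the quadric cone case should be ruled out by a projection-from-the-vertex argument analogous to Lemma \ref{lm0}. Let $L_1,L_2$ be representatives of the two rulings of $Q$ and set $D_i:=\varphi_2^{\ast}L_i$. The Veronese identity $v^{\ast}\mathcal{O}_{\mathbb{P}^8}(1)=\mathcal{O}_Q(2,2)$ together with $\varphi_2^{\ast}\mathcal{O}_{S_2}(1)=\mathcal{O}_S(2K_S)$ gives $K_S\sim D_1+D_2$, $D_i^2=0$ and $D_1\cdot D_2=4$.

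Stein-factorizing $\mu_i\circ\varphi_2:S\to\mathbb{P}^1$ produces fibrations $\tilde f_i:S\to\tilde C_i$ with $D_i\equiv m_i\tilde F_i$, where $\tilde F_i$ is a general fiber. Adjunction gives $K_S\cdot\tilde F_i=2g(\tilde F_i)-2$, while the identity $K_S\sim m_1\tilde F_1+m_2\tilde F_2$ yields $K_S\cdot\tilde F_i=m_j(\tilde F_1\cdot\tilde F_2)$ for $\{i,j\}=\{1,2\}$; combined with $K_S^2=8$ (so $m_1m_2(\tilde F_1\cdot\tilde F_2)=4$) and the lower bound $g(\tilde F_i)\geq 3$ from Lemma \ref{lmX}, a short case analysis forces $m_1=m_2=1$, $\tilde C_i\cong\mathbb{P}^1$, $\tilde F_1\cdot\tilde F_2=4$ and $g(\tilde F_i)=3$.

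The crucial step is constructing the involution. Since $\tilde F_1^2=0$, adjunction gives $K_{\tilde F_1}\sim D_2|_{\tilde F_1}$, so the restriction $\varphi_2|_{\tilde F_1}:\tilde F_1\to L_1\cong\mathbb{P}^1$ is a base-point-free $g^1_4$ which is a sub-pencil of the canonical system $|K_{\tilde F_1}|$. One then shows that this $g^1_4$ factors through a degree-$2$ map on $\tilde F_1$ (equivalently, that $\tilde F_1$ is hyperelliptic and the $g^1_4$ is composed with the hyperelliptic $g^1_2$) and that the resulting fiberwise involutions glue to a biregular involution $\sigma$ of $S$ satisfying $\varphi_2\circ\sigma=\varphi_2$. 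The main obstacle is producing this fiberwise involution; the expected route is to exploit the presence of two genus-$3$ pencils cutting canonical divisors on each other's general fibers together with the global constraints from $p_g=q=1$, for instance by analyzing $\varphi_{2\ast}\mathcal{O}_S$ as a rank-$4$ sheaf on $Q$ and splitting off a rank-$2$ summand, or via a slope-type inequality for the fibrations $\tilde f_i$. Once $\sigma$ is in hand, $\varphi_2$ factors through $S/\sigma$, and Proposition \ref{polizzi} finishes the proof.
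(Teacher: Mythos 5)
Your opening reduction is sound as far as it goes: with $D_i=\varphi_2^{\ast}L_i$ one indeed gets $D_i^2=0$, $D_1\cdot D_2=4$, $K_S\cdot D_i=4$, and together with $g\geq 3$ from Lemma \ref{lmX} this forces two genus-$3$ fibrations over $\mathbb P^1$ whose fibers meet in $4$ points. But the proof has a genuine gap exactly where you flag ``the main obstacle'': nothing in the proposal shows that $\tilde F_1$ is hyperelliptic, that the $g^1_4$ cut by the other ruling is composed with a $g^1_2$, or that the fiberwise involutions glue to a biregular involution commuting with $\varphi_2$. Worse, the stated starting point for that step is not justified: $K_S-D_1-D_2$ is only \emph{numerically} trivial, and in fact it is a nonzero $2$-torsion class $\eta$ (nonzero because $h^0(D_1+D_2)=4\neq 1=h^0(K_S)$), so the degree-$4$ pencil on $\tilde F_1$ sits inside the system of $K_{\tilde F_1}$ twisted by the restriction of $\eta$, not inside the canonical system. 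Even if it were canonical, a base-point-free $g^1_4$ contained in the canonical $g^2_4$ of a genus-$3$ curve is just the projection of a plane quartic from an external point and in general factors through no involution, so a genuinely global argument is unavoidable; the strategies you sketch (splitting $\varphi_{2\ast}\mathcal O_S$, slope inequalities) are not carried out. A secondary issue: the quadric cone cannot be dismissed ``as in Lemma \ref{lm0}'', since its directrix is rational; projecting from the vertex only produces another genus-$3$ pencil and yields no contradiction, and the proposition is meant to cover that case as well.

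The paper's proof runs along a completely different line, and the nonzero torsion class your argument discards is its engine. From $\eta=H-K_S\neq 0$ with $2\eta\equiv 0$ one builds the \'etale double cover $\psi:X\to S$, computes $\chi(X)=2$, $K_X^2=16$, $p_g(X)=5$, $q(X)=4$, and applies Castelnuovo--de Franchis (Lemma \ref{LCdF}) to obtain a fibration $X\to C$ with $g(C)\geq 2$. The inequalities of Proposition \ref{beau} then force this to be a smooth isotrivial genus-$3$ fibration over a genus-$2$ curve; Serrano's structure theorem (Proposition \ref{serr}) gives $X\cong (A\times F)/G$ with $\vert G\vert=2$ and $F$, $A$ hyperelliptic of genus $3$; and quotienting by the hyperelliptic involution and descending along $\psi$ produces on $S$ a fibration over a curve of genus $\leq 1$ --- hence, since the Albanese pencil is the unique irrational one, over $\mathbb P^1$ --- with hyperelliptic genus-$3$ fibers. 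The factorization of $\varphi_2$ through a double cover of a rational surface is then the external result \cite{B1}. To complete your route you would essentially have to reprove that input, which is precisely the part your proposal leaves open.
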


\begin{proof}
Let $H$ be the pull back to $S$ of a general
hyperplane section of $Q$, so that
 $2H\equiv 2K_{S}$.
Let $\eta= H-K_{S}$. 
Then $2\eta\equiv 0$ and so,
since $h^0(K_{S}+\eta)=h^0(H)=4\neq 
h^0(K_{S})$,
$\eta$~is a non zero $2$-torsion element.

\ 

Let $\psi:X\to S$ be the \'etale double cover
defined by $2\eta\equiv 0$. 
We have $\chi(X)=2\chi(S)=2$, $K_{X}^2=2K^2_{S}=16$ and
 $p_{g}(X)=p_{g}(S)+h^0(K_{S}+\eta)=5$.
Whence  $q(X)=4$ and thus, 
by Proposition \ref{LCdF}, there exists
a fibration $f:X\to C$ over a curve of genus 
$g(C)\geq 2$.
Let $F$ be a general fiber of $f$ and $g(F)$ its genus.
 Since $X$ is of general type, $g(F)\geq 2$.
Observe that, since $\psi:X\to S$ is \' etale and
$S$ is minimal, $X$ is minimal. 
In particular, the fibration
$f:X\to C$ is relatively minimal.\ 

\

{\bf Claim. }{\em 
The fibers of $f:X\to C$ are smooth and isomorphic curves of genus~$3$,
and  we have $g(C)=2$.}


Suppose  $g(F)=2$.
Since $q(X)=5$, by Proposition \ref{beau} we get $g(C)\geq 3$.
Let $\sigma$ be
the involution on $X$ induced by the double cover $\psi:X\to S$.
Since $F$ does not dominate $C$, $\sigma (F)$
is a fiber of $f$.
 Then there are a fibration $f^\prime:S\to C^\prime$
over a smooth curve whose general fiber is  $\psi(F)$
 and a finite morphism $h:C\to C^\prime$
such that $f^\prime \circ\psi=h\circ f$.
Since the genus of $\psi(F)$ is at most $g(F)=2$,
we  contradicts 
Lemma \ref{lmX}. 
Therefore, $g(F)\geq 3$. 

By Proposition \ref{beau}, 
we have 
$
16=K^2_{X}\geq 8(g(F)-1)(g(C)-1)
$
which yields
  $g(F)=3$ and $g(C)=2$.
Since $12\chi(X)-K_{X}^2=8=4(g(F)-1)(g(C)-1)$, 
the fibers are all smooth and isomorphic.
This proves the claim.

\

By Proposition \ref{serr}
 there are a curve $A$ and a finite group $G$
acting faithfully and holomorphically on $F$ and $A$,
such that
$X\cong (A\times F)/G$.
Furthermore, there exists a fibration
$f_{1}:X\to F/G$ whose general fiber, say $\hat A$, is ismorphic to $A$.
In paticular,  we have $F\cdot \hat A=\vert G\vert$.

Let $C_{1}=F/G$.
Since $4=q(X)=g(A/G)+g(F/G)=2+g(C_{1})$, 
we get $g(C_{1})=2$  
and the same argument we used for $f:X\to C$
yields $g(A)=3$.

We have $p_{g}(A\times F)=g(F)g(A)=9$ and 
$q(A\times F)=g(F)+g(A)=6$. 
Thus $\chi (A\times F)=4$ and so,
since $A\times F\to X$ is \' etale, 
$
\vert G\vert = 2.
$
Hence $\hat A\cdot F=2$ and 
then $f_{1}$ (resp. $f$) induces a double cover
$F\to C_{1}$ (resp. $A\to C$).
It follows that
$F$ (resp. $\hat A$) is a hyperelliptic curve (cf. \cite[Lemma 5.10]{Accola}).

\bigskip

Let $\tau$ 
 be the involution
on $X$ induced by the hyperelliptic involution
of $F$, and  let
 $Y=X/\tau$ be
 the quotient. 
Let $\sigma$ be the involution induced 
 by the double cover $X\to S$,
and let $N\subseteq Aut (X)$ 
be the subgroup generated by 
 $\tau$ 
 and $\sigma$.
We have a commutative diagram
$$
  \xymatrix{
& X 
 \ar[ld]_{\psi}  \ar[rd]^\phi 
\\
S=X/\sigma   \ar[rd]_{\tilde\psi}& & Y=X/\tau  \ar[ld]^{\tilde\phi}
\\
& X/N&
}
$$
where $\phi$ is the canonical projection, and
$\tilde \psi,\tilde \phi$ are double covers.
Note that, since the fibers of $f:X\to C$ are all isomorphic,
$\tau$ has not isolated fixed point and hence $Y$ is smooth.

By construction, there is a fibration $f^\prime :Y\to C$ whose general
fiber is $\phi(F)$, which is a smooth rational curve.
In particular, $Y$ is not birational to  $S$.
Note that since $\phi(A)$ dominates $C$, 
$\phi(A)$ is not rational.

\

Let $\tilde \sigma$ be the involution induced on $Y$ by the 
double cover $Y\to X/N$.
Since $\phi(F)$ does not dominate $C$, 
we have that $\tilde \sigma(\phi(F))$
is a fiber of $f^\prime:Y\to C$.

Then there is a fibration $\tilde f:X/N\to \tilde C$ and
a finite morphism $h:C\to \tilde C$
such that $h\circ f^\prime= \tilde f \circ \tilde \phi$ and
$\tilde \phi(\phi(F))$ is a fiber of $\tilde f$.
Let $\Gamma=\tilde \phi(\phi(F))$.

The composite map $\tilde f\circ \tilde \psi=S\to \tilde C$ is
a surjective morphism with general fiber the pull back of
$\Gamma$.
Since $\tilde \psi $ is a double cover and $\Gamma$ is rational, 
  $\tilde \psi^\ast \Gamma$ is connected,
 i.e. $\tilde f\circ \tilde \psi$ is a fibration.
 By Lemma \ref{lmX} we have
$g(\tilde C)\leq 1$.

By the commutativity of the diagram we have 
$\tilde \psi^\ast\Gamma=\psi (F)$.
Since $\psi$ is \'etale and on $S$ there are no pencils of genus
two curves, $\psi(F)\cong F$ is a hyperelliptic curve of genus $3$.

\

By applying the same argument to the fibration $f_{1}:X\to C_{1}$
we get a fibration $S\to \tilde C_{1}$ such that $g(\tilde C_{1})\leq 1$
 whose general fiber is a smooth hyperelliptic curve of genus
$3$ isomorphic to $\hat A$.

It is easy to see that these two fibrations are not the same.
 Therefore, since on $S$ there exists exactly one fibration over 
an elliptic  curve, namely the Albanese fibration,
we have $C\cong \pp^1$ or $C_{1}\cong \pp^1$.
By \cite{B1}  the bicanonical map of $S$ factors through 
a double cover onto a rational surface.
\end{proof}

Finally  we consider the last case.
%
%
To begin with, let us introduce some notation.

\

From now on we assume that 
$S_{2}$ is 
the image of $\mathbb P^2$ in $\pp^8$ by the birational map
associated to the linear system 
$\vert \calO_{\mathbb P^2}( 3)\otimes \mathcal I_{x}\vert$,
where $\mathcal I_{x}$ is the ideal sheaf of
a point $x\in \mathbb P^2$.
Let $\hat \pp\to \pp^2$ be the blow up of $x$,
$\ell\subset \hat \pp$ the pull back of a general line in  $\pp^2$,
and $\hat \ell$ the strict transform of a general line through $x$.
Then $S_{2}$ is the image of $\hat \pp$
by the birational morphism $\mu:\hat \pp\to \pp^8$
 associated to  $\vert 2\ell+\hat \ell\vert$.
 Note that $\vert 2\ell+\hat \ell\vert=\vert- K_{\hat \pp}\vert$.

Let $L$ and $\hat L$ be the pull back to $S$ of
$\mu_{\ast }\ell$ and $\mu_{\ast}\hat \ell$, respectively, so that 
$2K_{S}\in\vert2L+\hat L\vert$.
We have $\hat L^2=0$ and $ \hat L\cdot K_{S}=4$.
Since $\hat L$ is a fiber of the composite morphism 
$\varphi_{\hat \ell}\circ \varphi_{2}:S\to \pp^1$,
it follows from Lemma \ref{lmX} that $\hat L$ is a 
smooth connected curve of genus $3$. 
In particular, $\varphi_{\hat L}=\varphi_{\hat \ell}\circ \varphi_{2}:S\to \pp^1$
is a fibration.

\

Fix a smooth curve $B\in\vert \hat L\vert$, and
 let $\psi :X\to S$ be the double cover defined by 
$B\equiv 2(K_{S}-L)$. 
Then $X$ is a smooth surface with $\chi(X)=3$
and $K_{S}^2=24$.
 Note that, since $S$ is minimal of general type and the branch
 curve is a smooth curve of genus $3$, 
 $X$ is a minimal surface of general type.

In order to compute the invariants of $X$ we observe
that $\varphi_{2}(\ell)$
is  a twisted cubic in $\pp^3$.
Then $h^0(2K_{S}-L)=h^0(2K_{S})-4=5$, and so
$$p_{g}(X)=h^0(K_{S}+(K_{S}-L))+p_{g}(S)=6.$$

Whence $q(X)=4$ and thus, by 
Proposition \ref{LCdF}, there is 
a fibration $f:X\to \mathbb B$ onto a smooth curve $\mathbb B$ of
genus $g(\mathbb B)\geq 2$.
Let $F$ be a general fiber of $f$ and $g$ its genus.
By Proposition \ref{beau} we have the following possibilities
\begin{equation}   \label{eq2}
\begin{aligned}
g(F)= 4 &\, \text { and } g(\mathbb B)=2; \\ 
g(F)=3 &\, \text { and } g(\mathbb B)=2; \\ 
g(F)=2 &\, \text { and } 2\leq g(\mathbb B)\leq 4. 
\end{aligned}
\end{equation}

\

Consider the composite morphism
 $\varphi_{\hat L}\circ\psi:X\to \pp^1$.
 One of the following cases occurs.
 \begin{itemize}
 \item[$(\dagger)$]
 $  h_{1}=\varphi_{\hat L}\circ \psi:X\to \pp^1$ is a fibration whose 
 general fiber is $\psi^\ast \hat L$;
  \item[$(\ddagger)$]
there exist a fibration $  h_{2}:X\to C$
and a double cover $\tilde \psi:C\to \pp^1$ such that
$h_{2}\circ \tilde \psi=\varphi_{\hat L}\circ \psi$ and
 a general fiber of $h_{2}$ is isomorphic to $\hat L$.
 \end{itemize}

Let $M$ be a general fiber of $ h_{i}$, so that
$g(M)=5$ in the case $(\dagger)$ and
$g(M)=3$ in the case $(\ddagger)$.

\

We will proceed by analyzing 
these two cases separately.
We will show that the first one does not occur and 
that in the second case $\hat L$
is hyperelliptic.
Before going further let  
us observe that in any case, since 
$$
\left(
\frac {F}{g(F)-1}+\frac{M}{g(M)-1}-\frac{K_{X}}{6}
\right)\cdot K_{X}=0,
$$
by the algebraic index theorem
 we have
\begin{equation}  \label{eq1}
3(F\cdot M)\leq
(g(F)-1)(g(M)-1),
\end{equation}
and  the equality holds if and only if 
$$
6(g(M)-1)F+6(g(F)-1)M \sim (g(F)-1)(g(M)-1)K_{X}.
$$

\

\begin{lemma}   \label{not}
The case $(\dagger)$ does not occur.
\end{lemma}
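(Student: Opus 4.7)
The plan is to derive a contradiction for each case in the list (\ref{eq2}) by combining the involution $\sigma$ induced on $X$ by $\psi$, the index-theorem inequality (\ref{eq1}), and the structure theorems.

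Since $f$ has connected fibers, $\sigma$ descends to an involution $\bar\sigma$ of $\mathbb B$. If $\bar\sigma = \mathrm{id}$, then $f$ descends to a fibration $S \to \mathbb B$ with $g(\mathbb B) \geq 2$, which violates Lemma \ref{lmX}; hence $\bar\sigma \neq \mathrm{id}$, so for a general fiber $F$ one has $\sigma(F) \neq F$, $\psi$ restricts to an isomorphism $F \to \psi(F)$, and we obtain a fibration $\bar f \colon S \to \mathbb B/\bar\sigma$ with general fiber isomorphic to $F$. Lemma \ref{lmX} then forces $g(\mathbb B/\bar\sigma) \leq 1$ and $g(F) \geq 3$, so the third case of (\ref{eq2}) is excluded. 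Moreover $\bar f \neq \varphi_{\hat L}$: otherwise the disconnected divisor $F + \sigma(F) = \psi^{\ast}\bar f^{-1}(\ast)$ would equal the connected $M = \psi^{\ast}\hat L$, contradicting assumption $(\dagger)$. Consequently the morphism $\Phi = (\varphi_{\hat L}, \bar f) \colon S \to Y := \pp^1 \times (\mathbb B/\bar\sigma)$ is dominant of degree $d = \hat L \cdot \psi(F) = F \cdot M$ (by the projection formula), and (\ref{eq1}) yields $d \leq \tfrac{4}{3}(g(F) - 1)$.

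For $g(F) = 3$ this forces $d \in \{1, 2\}$. If $d = 1$, then $S$ is birational to the ruled surface $Y$, contradicting that $S$ is of general type. If $d = 2$, then $\Phi$ realizes $S$ as a double cover of $Y$, and the standard formulas $\chi(\mathcal O_S) = 2\chi(\mathcal O_Y) + \tfrac{1}{2} L(L + K_Y)$ and $K_S^2 = 2(K_Y + L)^2$, together with $\chi(S) = 1$ and $K_S^2 = 8$, pin down the class of the square root $L$ of the branch divisor: for $g(\mathbb B/\bar\sigma) = 0$ they force $B \equiv 2L \equiv (2, 0)$ on $\pp^1 \times \pp^1$, producing a cover birational to $\pp^1 \times \pp^1$ (not of general type), while for $g(\mathbb B/\bar\sigma) = 1$ they yield $K_S^2 = 0$; both contradictions.

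For $g(F) = 4$ (so $g(\mathbb B) = 2$), Proposition \ref{beau} is an equality in parts (1) and (2) (indeed $24 = 8 \cdot 3 \cdot 1$ and $12\chi(X) - K_X^2 = 12 = 4 \cdot 3 \cdot 1$), so $f$ is isotrivial with only smooth fibers. By Proposition \ref{serr}, $X = (A \times F_0)/G$ with $A \times F_0 \to X$ \'etale, $g(F_0) = 4$, $g(A/G) = g(F_0/G) = 2$, and $\chi(X) = 3$ forces $g(A) = |G| + 1$ with $|G| \in \{2, 3\}$. Pulling $M$ back via $\pi \colon A \times F_0 \to X$ and writing $\pi^{\ast} M \equiv \alpha F^A + \beta F^F$ in the basis of fibers of the two projections, the conditions $(\pi^{\ast} M)^2 = 0$ and $\pi^{\ast} M \cdot K_{A \times F_0} = 8|G|$ force $\alpha\beta = 0$ with the nonzero coefficient equal to $4$. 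Hence $\pi^{\ast} M$ is disjoint from a general fiber of one of the projections and is therefore contained in a finite union of such fibers; but as an \'etale cover of the connected genus-$5$ curve $M$ of degree $|G| \leq 3$, every connected component of $\pi^{\ast} M$ has genus at least $5$, strictly exceeding $\max\{g(A), g(F_0)\} = 4$, the desired contradiction. The main obstacle is exactly this last case: the inequality (\ref{eq1}) alone does not suffice, and the rigid product description from the equality cases of Proposition \ref{beau} together with Proposition \ref{serr} is essential.
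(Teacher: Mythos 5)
Your reduction to the cases $g(F)\in\{3,4\}$ via \eqref{eq1} is in the right spirit, but two of your three subcases contain genuine gaps, and your route diverges from the paper's, which is organized around the value of $F\cdot M$ after the key observation that $\psi^{*}B$ is a \emph{double} fiber of $h_{1}$, so that $D\cdot M\equiv 0\pmod 2$ for every divisor $D$ --- a parity constraint you never exploit. First, in the subcase $g(F)=3$, $F\cdot M=2$, you apply the formulas $\chi(\mathcal O_S)=2\chi(\mathcal O_Y)+\tfrac12 L(L+K_Y)$ and $K_S^2=2(K_Y+L)^2$ to $\Phi\colon S\to Y$. These hold only for a smooth double cover branched along a \emph{smooth} divisor; $\Phi$ is merely a generically finite degree-two morphism, its branch curve has no reason to be smooth (indeed it cannot be: restricting $\Phi$ to a fiber $\hat L$ gives a degree-two map from a genus-three curve onto a ruling, so the branch divisor meets each ruling in at least $8$ points and is far from the class $(2,0)$), and once the correction terms of the canonical resolution are inserted the numerology no longer pins down $L$ nor yields $K_S^2=0$. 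The paper disposes of this case quite differently: since $\psi^{*}B=2R$ is a double fiber of $h_1$, one gets $F\cdot R=1$, so $f$ restricts to an isomorphism $R\to\mathbb B$, forcing $g(\mathbb B)=g(\hat L)=3$, hence $g(F)=2$ by \eqref{eq2}, contradicting \eqref{eq1}.

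Second, in the subcase $g(F)=4$ your assertion that $(\pi^{*}M)^{2}=0$ and $\pi^{*}M\cdot K_{A\times F_0}=8|G|$ force $\alpha\beta=0$ presupposes that $\pi^{*}M$ is numerically equivalent to a combination of the two fiber classes, i.e.\ that the N\'eron--Severi group of $A\times F_{0}$ has rank two; nothing justifies this, and the index theorem only gives $0=(\pi^{*}M)^{2}\le 2\alpha\beta$, which is the wrong direction. Concretely, for $|G|=3$ the linear condition on $\pi^{*}M\cdot K_{A\times F_0}$ admits $(\alpha,\beta)=(2,2)$, which your closing genus count does not exclude: a connected \'etale triple cover of $M$ has genus $13$ and can perfectly well admit degree-two maps onto curves of genus $4$. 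The paper avoids this by working in the branch $F\cdot M=4$, where \emph{equality} in \eqref{eq1} gives the linear equivalence $4F+3M\sim 2K_X$; intersecting with the fiber $\hat A$ of the second isotrivial fibration yields $4|G|+3M\cdot\hat A\le 12$, hence (using parity) $M\cdot\hat A=0$ and the contradiction $g(M)=5>g(A)$. Finally, your opening claim that $\sigma$ descends to $\mathbb B$ ``since $f$ has connected fibers'' is a non sequitur: you need $\sigma$ to permute the fibers of $f$, which requires an argument (e.g.\ choosing the Castelnuovo--de Franchis pencil inside the $\sigma$-anti-invariant $1$-forms); the paper's proof never needs this descent.
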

\begin{proof}
To rise a contradiction, suppose that $h_{1}$ is
a fibration.
The curve $\psi^\ast B$
is a double fiber of $ h_{1}$. 
Then $D\cdot M\equiv 0\pmod 2$ for any
 divisor  $D\in Pic (X)$ and
in particular, by \eqref{eq2} and \eqref{eq1}, 
we have
$F\cdot M\in\{0,2,4\}$.

\

Suppose $F\cdot M=4$. 
Since $g(M)=5$ and $g(F)\leq 4$ (cf. \eqref{eq2}),  \eqref{eq1} yields $g(F)=4$.
Then
\begin{equation}   \label{eqFMK}
4F+3M\sim 2K_{X}.
\end{equation} 
By \eqref{eq2}, we have $g(\mathbb B)=2$ and so,
by Proposition \ref{beau}, $f:X\to \mathbb B$ is isotrivial and smooth.  
Then, there exist a curve $A$ and a group $G$
such that $(F\times A)/G$.
Since $q(X)=g(F/G)+g(A/G)=g(F/N)+g(\mathbb B)$,
we have $g(F/G)=2$. 

Let  $\mathbb B_{1}=F/G$ and let $f_{1}:X\to \mathbb B_{1}$
 be the natural projection.
Then $f_{1}$ is an isotrivial fibration whose  general fiber, say $\hat A$,
 is isomorphic to $A$. 
Note that, since $g(\mathbb B_{1})=2$, 
Proposition \ref{beau} yields $g(A)\leq 4$.
Hence by \eqref{eqFMK} we get
 $$
4\vert G\vert +3M\cdot \hat A=(4F+3M)\cdot \hat A= 2K_{X}\cdot \hat A=4(g(A)-1)\leq
12.
 $$

Observe that  $\vert G\vert>1$, for otherwise
 $X\cong F\times \mathbb B$ and thus 
 $q(X)=g(F)+g(\mathbb B)=6$,
which is a contradiction.

It follows that $\hat A\cdot M\leq 1$ 
and then, since  $\hat A\cdot M\equiv 0\pmod 2$,
$\hat A\cdot M=0$.
  Thus $M$ is a fiber of $f_{1}$
and so it is isomorphic to $A$. 
But this is absurd, because $g(M)=5$
and $g(A)\leq 4$.
Then $F\cdot M\neq 4$.

\bigskip

Since $g(M)=5$ and $g(F)\leq 4$, 
$F\cdot M=0$ cannot occur.
Therefore we are left with   $F\cdot M=2$. 
Since $\psi^{\ast}B$ is a double fiber of 
$h_{1}$, 
  $F\cdot \psi^{-1}(B)=1$
and so the restriction of $f:X\to \mathbb B$ 
to $B$ is an isomorphism.
Thus $g(\mathbb B)=g(B)=g(\hat L)=3$ and, 
by \eqref{eq2}, we get $g(F)=2$. 
But this contradicts \eqref{eq1}.
Therefore $F\cdot M\neq 2$,
 and so
case $(\dagger)$ does not occur.
\end{proof}
\

\begin{lemma}  \label{lm01}
Suppose that $S_{2}$ is a del Pezzo surface of degree $8$.
Then $\varphi_{2}$ factors through a double cover
 onto a rational surface.
\end{lemma}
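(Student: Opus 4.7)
\medskip

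\noindent\textbf{Proof plan.} By Lemma~\ref{not}, case $(\dagger)$ is excluded and we work in case $(\ddagger)$: there is a fibration $h_{2}:X\to C$ with general fiber $M\cong\hat L$ of genus $3$, together with a double cover $\tilde\psi:C\to\pp^1$. My strategy is to imitate the proof of Proposition~\ref{prop1}. Using the two fibrations $f:X\to\mathbb B$ and $h_{2}:X\to C$, I plan to exploit the index-theorem inequality \eqref{eq1} together with Proposition~\ref{beau} to pin down the numerical invariants, invoke Proposition~\ref{serr} to realize $X$ as a product quotient, deduce from this structure that $\hat L$ is hyperelliptic, and finally conclude that $\varphi_{2}$ factors through a double cover onto a rational surface.

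Substituting $g(M)=3$ in \eqref{eq1} gives $3(F\cdot M)\le 2(g(F)-1)$. Combined with \eqref{eq2}, this eliminates $g(F)=2$: in that case $F\cdot M=0$ would force $M$ to be contained in a fiber of $f$, contradicting $g(M)>g(F)$ once equality in Proposition~\ref{beau}(2) makes $f$ smooth. For $g(F)\in\{3,4\}$, I expect to force equality in \eqref{eq1} via a case analysis mixing parity of intersection with the ramification divisor $R$ of $\psi$, Lemma~\ref{lmX} (fibrations of $S$ to curves of genus $\geq 2$ are forbidden, and any positive-genus base must be the Albanese elliptic curve), and the behaviour of $h_{2}|_{F}:F\to C$. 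This should yield equality also in parts $(1)$ and $(2)$ of Proposition~\ref{beau}, so that $f$ is relatively minimal, isotrivial and smooth; by Proposition~\ref{serr}, $X\cong(A\times F)/G$ and there is a second fibration $f_{1}:X\to F/G$ with general fiber $\hat A\cong A$. A numerical matching then identifies $h_{2}$ with $f_{1}$, so that $A\cong\hat L$; counting Euler characteristics through the \'etale cover $A\times F\to X$ pins $|G|=2$; and since then $F\to F/G$ and $A\to A/G=\mathbb B$ are double covers, \cite[Lemma~5.10]{Accola} shows that $F$ and $A\cong\hat L$ are hyperelliptic.

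To finish, the hyperelliptic involution on the general fiber $\hat L$ of $\varphi_{\hat L}:S\to\pp^1$ induces a birational, hence (by minimality of $S$) biregular, involution $\sigma$ of $S$ whose quotient $S/\sigma$ is ruled over $\pp^1$, and therefore rational. Since $\hat L^2=0$, adjunction gives $K_{S}|_{\hat L}=K_{\hat L}$, so the restriction of $|2K_{S}|$ to $\hat L$ lies inside the $\sigma$-invariant system $|2K_{\hat L}|$; combined with $\varphi_{2}(\hat L)=\hat\ell\cong\pp^1$, this should show that $\varphi_{2}$ is $\sigma$-invariant. Consequently $\varphi_{2}$ factors through the double cover $S\to S/\sigma$, and Proposition~\ref{polizzi} confirms that the target is rational. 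The main obstacle in carrying this out is the middle step: identifying $h_{2}$ with the Serrano fibration $f_{1}$ and transporting hyperellipticity from the factor $A$ to the fiber $\hat L\subset S$, which requires a careful comparison of the two product-quotient fibrations $f$ and $f_{1}$ on $X$ with the original fibration $h_{2}$.
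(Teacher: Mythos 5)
Your plan correctly sets up the two fibrations $f:X\to\mathbb B$ and $h_{2}:X\to C$ and the inequality $3(F\cdot M)\le 2(g(F)-1)\le 6$, but it has a genuine gap: it never deals with the case $F\cdot M=0$, which is where almost all of the work in the paper's proof lies. If $F\cdot M=0$ then $M$ and $F$ are fibers of the same fibration, so $g(F)=g(M)=3$ and $C=\mathbb B$ has genus $2$; inequality \eqref{eq1} reads $0\le 4$ and gives no contradiction, and your strategy of ``forcing equality in \eqref{eq1}'' cannot succeed here because for $g(F)=g(M)=3$ equality would mean $3(F\cdot M)=4$, which no integer satisfies. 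The paper excludes this case by a completely different mechanism: Riemann--Hurwitz for the double cover $\tilde\psi:C\to\pp^1$ forces $\varphi_{\hat L}:S\to\pp^1$ to have at least five double fibers $2\Delta_{1},\dots,2\Delta_{5}$, each $\Delta_{i}$ is then a ramification component of $\varphi_{2}$, and combining the ramification formula $3K_{S}\equiv\sum(r_{i}-1)R_{i}$ with the analysis of $E=\varphi_{2}^{\ast}e=2A_{1}$ (where $A_{1}\cdot K_{S}=1$, $A_{1}^2=-1$, and $A_{1}$ has ramification index $2$) yields the contradiction $(3K_{S}-A_{1}-\sum\Delta_{i})\cdot A_{1}=3+1-5<0$ for an effective divisor not containing $A_{1}$. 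Nothing in your sketch (parity arguments, Lemma~\ref{lmX}, the behaviour of $h_{2}|_{F}$) points toward an argument of this kind, and you have in fact identified the wrong bottleneck: you flag the Serrano identification as ``the main obstacle,'' while the real obstacle is ruling out $F\cdot M=0$. The case $F\cdot M=1$ is also left implicit, though that one is easy ($M\cong\mathbb B$ forces $g(\mathbb B)=3$, hence $g(F)=2$ by \eqref{eq2}, contradicting \eqref{eq1}).

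Two smaller points. First, in the surviving case $F\cdot M=2$ your detour through Proposition~\ref{serr} is unnecessary: \eqref{eq1} and \eqref{eq2} already give $g(F)=4$ and $g(\mathbb B)=2$, so the restriction $f|_{M}:M\to\mathbb B$ is a degree-$2$ map from a genus-$3$ curve onto a genus-$2$ curve, and $M\cong\hat L$ is hyperelliptic directly by \cite[Lemma 5.10]{Accola} --- no product-quotient structure, no identification of $h_{2}$ with a Serrano fibration, and no computation of $|G|$ is needed. Second, your elimination of $g(F)=2$ invokes equality in Proposition~\ref{beau}(2) to make $f$ smooth, but for $g(F)=2$ that equality holds only when $g(\mathbb B)=4$; the correct (and simpler) reason is that $F\cdot M=0$ forces $M$ to be a fiber of $f$, so $g(M)=g(F)$, which is impossible since $g(M)=3$. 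Your concluding paragraph essentially re-proves the content of \cite{B1}, which the paper simply cites once hyperellipticity of the genus-$3$ fibers of $\varphi_{\hat L}$ is established; your sketch of that step (gluing the fiberwise hyperelliptic involutions and showing $\varphi_{2}$ is $\sigma$-invariant) is plausible but not carried out.
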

\begin{proof}
By Lemma \ref{not} we have  a fibration  $h_{2}:X\to C$ 
whose general fiber $M$ is a smooth curve of genus $3$
isomorphic to $\hat L$, and a double cover $\tilde \psi:C\to \pp^1$
such that $ \tilde \psi\circ h_{2}=\varphi_{\hat L}\circ \psi$.

By \eqref {eq1} and   \eqref {eq2} we have 
$3(F\cdot M)\leq 2(g(F)-1)\leq 6$.

\

Suppose $F\cdot M=0$. Then $M$ and $F$ are fibers of the same fibration.
Hence $g(F)=g(M)=3$ and, by  \eqref {eq2}, $C=\mathbb B$ is a curve of genus $2$.
By Rieman-Hurwitz for the double cover $\tilde \psi:C\to \pp^1$,
 the fibration $\varphi_{\hat L}:S\to \pp^1$ has at least 
$(2b+2)-1=5$ double fibers,
say $2\Delta_{1},\dots,2\Delta_{5}$.
The curves  $\hat \ell_{i}=\varphi_{2}(2\Delta_{i})\in\vert \hat\ell\vert$ 
are reduced, irreducible and distinct.
Since $2\Delta_{i}=\varphi_{2}^{\, \ast} \hat \ell_{i}$,
  $\Delta_{i}$ is a component 
of the ramification divisor of $\varphi_{2}$
 for every $i\in\{1,\dots,5\}$.

\

Let $R$ be the ramification divisor of $\varphi_{2}:S\to S_{2}$.
Denote by $R_{i}$ the reduced and irreducible components
of $R$ such that $B_{i}=\varphi_{2}(R_{i})$ is a curve, 
and let $E_{j}$ be the components of $R$ which are contracted
by $\varphi_{2}$. 
Then (cf.  \cite[Lemme 3.2]{Be3})
$$
K_{S}=\varphi_{2}^{\, \ast} K_{S_{2}} +\sum (r_{i}-1)R_{i}+\sum e_{j}E_{j},
$$
where $e_{j}\geq 0$ is an integer and
 $r_{i}\geq 2$ is the ramification index of $\varphi_{2}$
at a generic point of $R_{i}$,
i.e. the coefficient of $R_{i}$ in $\varphi_{2}^\ast B_{i}$.
Since $K_{S}$ is ample we have $\sum E_{j}=0$.
Whence, since $2K_{S}\equiv\varphi^\ast_{2}(-K_{S_{2}})$, we get 
$$
 3K_{S}\equiv \sum (r_{i}-1)R_{i}.
$$

\

Let $e\subset S_{2}$ be the the image of the exceptional curve of $\mu_{x}$,
and let $E=\varphi^\ast e$. Then $E^2=-4$ and $E\cdot K_{S}=2$.
By  \cite{Mi2} there are no $(-n)$-curves on $S$.
Hence, since $K_{S}$ is ample,
 $E=A_{1}+A_{2}$, where $A_{1}$, $A_{2}$ are 
irreducible curves with  $A_{i}\cdot K_{S}=1$ 
and, 
 by adjunction and the algebraic index
theorem, 
$A_{i}^2=-1$, $i=1,2$.
Then $(A_{1}+A_{2})^2=-4$ yields $A_{1}\cdot A_{2}=-1$,
i.e. $A_{1}=A_{2}$. 
In particular, 
$A_{1}$ is a component of   $R$ with ramification index $2$.

It follows that the divisor
$$
3K_{S}-A_1-\sum_{i=1}^5 \Delta_{i} 
$$
is effective and  $A_{1}$ does not belong to it.
But then,  since
$4\Delta_{i}\cdot A_{1}=\hat L\cdot E=4 (\hat \ell\cdot e)=4$
for every $i$, we get
 $$
(3K_{S}-A_{1}-\sum \Delta_{i})\cdot A_{1}=3+1-5<0,
$$
 a contradiction.
Therefore, $F\cdot M\neq0$.

\

Suppose $M\cdot F=1$.
Thus $M$ is isomorphic to $\mathbb B$, and hence $g(\mathbb B)=3$.
But then, by \eqref{eq2}, we get  $g(F)=2$ which contradicts \eqref{eq1}.

\

Therefore we are left with   $M\cdot F=2$. 
 The restriction of  $f:X\to \mathbb B$ to $M$ is
 then a double cover $M\to \mathbb B$.
Hence, since $M$ is a smooth curve of genus $3$
and $\mathbb B$ has genus $2$,
$M$ is hyperelliptic [{\em loc.cit\, }].

Since  $\hat L$ is isomorphic to $M$,
$\varphi_{\hat L}:S\to \pp^1$ is fibration with 
general fiber a  hyperelliptic curve of genus $3$.
By \cite{B1} the bicanonical map of $S$ factors through a
double cover onto a rational surface.
This completes the proof.
\end{proof}

\

\section{Conclusion}  \label{sec3}

\

First of all we prove Theorem \ref{main th}.

\begin{proof}[Proof of Theorem \ref{main th}]
Let $d$ be the degree of $\varphi_{2}$.
 Then $d=2$ or $4$.

Suppose $d=2$. Let $\sigma$ be the involution induced by 
$\varphi_{2}:S\to S_{2}$. Then $\varphi_{2}$ factors through
$\sigma$ and $S_{2}$ is birational to $S/\sigma$.
Form Lemma \ref{polizzi} it follows that $S_{2}$ is rational.   

Suppose that $d=4$. Then, by Theorem \ref{thm d4},
$\varphi_{2}$ factors through a double cover onto a rational surface.
But this contradicts \cite{Po}.

Therefore, $d=2$, $S_{2}$ is  a rational surface
 and, since $K_{S}$ is ample, $\varphi_{2}$ is
finite, i.e. a double cover.
\end{proof}

\

The following theorem characterizes completely, 
from different points of view, minimal surfaces
of general type with $p_{g}=q=1$, $K_{S}^2=8$
and non bicanonical map. 

\begin{theorem} \label{main th2}
Let $S$ be a
 minimal surface
of general type with $p_{g}=q=1$ and $K_{S}^2=8$.
The following are equivalent.
\begin{enumerate}
\item
The bicanonical map is non birational.
\item 
The bicanonical map is a double cover onto a rational surface.
\item
 $S$ is birational to a du Val double 
plane of type $\mathcal D_{6}$ $($cf. \cite{B}$)$, i.e.
the smooth minimal model of a 
double cover of $\pp^2$ branched 
 over the union $B$ of a curve of degree $16$ 
with $6$ distinct lines through a point $p$,
 such that the essential singularities
of $B$  are the following:
\begin{itemize}
\item [ - ] $p$ is a singular point of multiplicity $14$, 
\item [ - ] there is a  $[5, 5]$-point on each line.
\end{itemize}
\item  
$S$ is isomorphic to the quotient of a product 
$F\times C$ of two smooth curves of genus $g(F)$ and $g(C)=3$
by a group $G$, where either
\begin{itemize}
\item [ - ] $g(F)=3$ and $G= \mathbb Z_{2}\times \mathbb Z_{2}$, or
\item [ - ]  $g(F)=4$ and $G=S_{3}$, or 
\item [ - ]  $g(F)=5$ and $G=D_{4}$.
\end{itemize}
\item 
There exists a fibration $S\to \pp^1$ whose general 
fiber is a smooth hyperelliptic curve of genus $3$.
\end{enumerate}
\end{theorem}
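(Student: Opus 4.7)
My plan is to prove the five conditions equivalent by assembling implications around condition $(2)$, which serves as the hub. The only genuinely new input is Theorem~\ref{main th}; the remaining steps combine Polizzi's classification~\cite{Po} with the double-plane analysis in~\cite{B} and the bicanonical-factorization criterion of~\cite{B1}.

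First, $(1)\Leftrightarrow(2)$ is essentially what has just been proved: the forward direction is Theorem~\ref{main th}, and the reverse is immediate because a degree-$2$ finite morphism is not birational. Next, for $(2)\Leftrightarrow(5)$, the direction $(5)\Rightarrow(2)$ is the result of~\cite{B1} already invoked in Proposition~\ref{prop1} and Lemma~\ref{lm01}. For $(2)\Rightarrow(5)$, I would pull back a pencil of rational curves on $S_{2}$ by $\varphi_{2}$: since $S_{2}$ is rational such a pencil exists, and since $K_{S}$ is ample the surface $S$ contains no smooth rational curve, so the preimage of a general element of the pencil is connected and is a double cover of $\pp^{1}$, hence hyperelliptic. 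Taking the Stein factorization gives a fibration $S\to\pp^{1}$; by Lemma~\ref{lmX} its general fiber has genus at least $3$, and a routine numerical check using $K_{S}^{2}=8$ and the relation $\varphi_2^\ast(\text{pencil class})$ forces the genus to equal exactly $3$.

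For $(2)\Leftrightarrow(4)$ I would apply Polizzi's classification~\cite{Po}, which under precisely the hypothesis ``bicanonical factors through a double cover onto a rational surface'' proves that $S\cong(F\times C)/G$ with $g(C)=3$ and $(g(F),G)$ in one of the three listed triples, and also establishes the converse by direct computation on the product-quotient model. Thanks to Theorem~\ref{main th}, the rationality hypothesis is now automatic, so~\cite{Po} applies without modification. For $(3)\Leftrightarrow(4)$ I would translate between the two descriptions: starting from $(4)$, the hyperelliptic quotient of $F$ combined with the $G$-action exhibits $S$ as a double cover of $\pp^{2}$ blown up at a point, and inspecting the branch divisor recovers a curve of degree $16$ together with the $6$ lines through a common point and the $[5,5]$-points of the $\mathcal D_{6}$ family defined in~\cite{B}; the converse is obtained by exhibiting the obvious involutions on such a double plane and identifying the product factors.

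The substantive technical step is $(2)\Rightarrow(5)$, and the main bookkeeping obstacle is $(3)\Leftrightarrow(4)$: verifying that, across all three triples $(g(F),G)\in\{(3,\mathbb Z_{2}\times\mathbb Z_{2}),(4,S_{3}),(5,D_{4})\}$, the multiplicities and essential singularities of the branch divisor match the $\mathcal D_{6}$ pattern exactly. Since both descriptions already exist in the literature, this reduces to matching invariants rather than constructing new objects.
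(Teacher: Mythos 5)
Your overall architecture (hub at condition $(2)$, citations to Theorem~\ref{main th}, \cite{Po}, \cite{B1}, \cite{B}) matches the paper, which proves the theorem purely by assembling $(1)\Leftrightarrow(2)$ from Theorem~\ref{main th}, $(2)\Rightarrow(3)$ and $(3)\Rightarrow(1)$ from \cite{B}, $(4)\Leftrightarrow(2)$ from \cite{Po}, and $(1)\Leftrightarrow(5)$ from \cite{B1}. The problems arise precisely in the two places where you replace a citation by your own argument.

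First, your proof of $(2)\Rightarrow(5)$ has a genuine gap. Under $(2)$ the image $S_{2}\subset\pp^{8}$ has degree $(2K_{S})^{2}/2=16$, and for a pencil $|\ell|$ of rational curves on $S_{2}$ with $\ell^{2}=0$ the pullback $F=\varphi_{2}^{\ast}\ell$ satisfies $2K_{S}\cdot F=2(\ell\cdot H)$, hence $g(F)=1+(\ell\cdot H)/2$. Lemma~\ref{lmX} gives $g(F)\geq 3$, i.e.\ $\ell\cdot H\geq 4$, but nothing forces equality: to get genus exactly $3$ you need a pencil of rational curves of degree exactly $4$ on a degree-$16$ rational surface in $\pp^{8}$, and there is no ``routine numerical check'' producing one. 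Establishing the existence of such a pencil (equivalently, of the genus-$3$ hyperelliptic fibration) is essentially the content of the adjunction analysis in \cite{B} and \cite{B1}; the paper sidesteps this entirely by citing $(1)\Leftrightarrow(5)$ from \cite{B1} in both directions. Second, your plan to prove $(3)\Leftrightarrow(4)$ by directly matching the three product-quotient models $(F\times C)/G$ with the $\mathcal D_{6}$ double-plane branch data is not mere bookkeeping: that dictionary is not in the cited literature, and constructing the degree-$16$ branch curve with a $14$-fold point and six $[5,5]$-points from each of the three group actions (and conversely recovering the product structure from the double plane) is a substantial geometric construction. The paper avoids it by never connecting $(3)$ and $(4)$ directly; instead it closes the cycle through $(1)$ and $(2)$ using the implications $(2)\Rightarrow(3)$ and $(3)\Rightarrow(1)$ already proved in \cite{B}. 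You should either adopt that routing or supply the missing constructions.
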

\begin{proof}
Implication $(1)\Rightarrow (2)$ follows from Theorem \ref{main th}, and 
$(2)\Rightarrow (1)$ is trivial.
The implications $(2)\Rightarrow (3)$ and $(3)\Rightarrow (1)$
are proved in \cite{B}.  
The equivalence $(4)\Leftrightarrow (2)$ follows from \cite{Po}.
Finally, $(1)\Leftrightarrow (5)$ follows from \cite{B1}.
\end{proof}

We remark that this classification is effective, that is these surfaces do 
exist. In fact,   in \cite{Po} Polizzi proves that they form three components of the moduli
space, one of dimension $5$ and two of dimension $4$.
He also constructs examples as in $(3)$, i.e.
quotient  of a product of curves.

\end{document}